\definecolor{strongblue}{RGB}{0,0,160}
\definecolor{strongred}{RGB}{160,0,0}
\renewcommand*{\eqref}[1]{\textcolor{strongblue}{(\ref{#1})}}
\date{27th November 2020}
\newcommand*{\imag}{\mathrm{i}}
\newcommand*{\rot}{R_{\mathrm{e}^{\imag\vartheta}}}
\renewcommand*{\d}{\mathrm{d}}
\newcommand*{\R}{\mathbb{R}}
\newcommand*{\C}{\mathbb{C}}
\newcommand*{\f}{\bm{B}}
\newcommand*{\ov}{\overline}
\newcommand*{\un}{\underline}
\newcommand*{\D}{\mathrm{D}}
\newcommand{\V}{\mathbb{V}}
\newcommand*{\F}{F}
\renewcommand*{\Im}{\operatorname{Im}}
\renewcommand*{\Re}{\operatorname{Re}}
\newcommand*{\inc}{\hspace{3pt}\rule[0.5pt]{2mm}{0.5pt}\rule[0.5pt]{0.5pt}{4.5pt}\hspace{3pt}}
\newcommand*{\hmet}{\mathcal{h}}
\newcommand*{\h}{h}
\title[Vortices and Dominated Splittings]{Vortices over Riemann Surfaces\\ and Dominated Splittings}
\author[T.~Mettler]{Thomas Mettler}
\address{Institut f\"ur Mathematik, Goethe-Universit\"at Frankfurt, 60325 Frankfurt am Main, Germany}
\email{mettler@math.uni-frankfurt.de, mettler@math.ch}
\author[G.P.~Paternain]{Gabriel P.~Paternain}
\address{Department of Pure Mathematics and Mathematical Statistics,
University of Cambridge,
Cambridge CB3 0WB, England}
\email{g.p.paternain@dpmms.cam.ac.uk}
\newtheorem{Theorem}{Theorem}[section]
\newtheorem{Lemma}[Theorem]{Lemma}
\newtheorem{Proposition}[Theorem]{Proposition}
\newtheorem{mainthm}{Theorem}
\newtheorem{introthm}{Theorem}
\theoremstyle{definition}
\newtheorem{Definition}[Theorem]{Definition}
\theoremstyle{remark}
\newtheorem{Remark}[Theorem]{Remark}
\newtheorem{Example}[Theorem]{Example}
\numberwithin{equation}{section}
\begin{document}

\begin{abstract}
We associate a flow $\phi$ to a solution of the vortex equations on a closed oriented Riemannian 2-manifold $(M,g)$ of negative Euler characteristic and investigate its properties. We show that $\phi$ always admits a dominated splitting and identify special cases in which $\phi$ is Anosov. In particular, starting from holomorphic differentials of fractional degree, we produce novel examples of Anosov flows on suitable roots of the unit tangent bundle of $(M,g)$.  
\end{abstract}

\maketitle

\section{Introduction}

\subsection{Background}

This paper is concerned with the description and study of a class of dynamical systems determined by the solutions of a pair of partial differential equations naturally arising in Abelian gauge theories on a closed oriented Riemannian $2$-manifold $(M,g)$ of negative Euler characteristic. 

Let $SM$ denote the unit tangent bundle of $(M,g)$. Given a smooth function $\lambda\in C^{\infty}(SM)$, we may consider the ODE for  $\gamma:\mathbb{R}\to M$
\begin{equation}
\ddot{\gamma}=\lambda(\gamma,\dot{\gamma})J\dot{\gamma},
\label{eq:thermo}
\end{equation}
where $J:TM\to TM$ denotes rotation by $\pi/2$ according to the orientation of the surface, and the acceleration of $\gamma$ is computed using the Levi-Civita connection of $g$. Equation \eqref{eq:thermo} describes the motion of particle on $M$ driven by a force orthogonal to its velocity with magnitude determined by $\lambda$.
As such it is easy to see that the speed of $\gamma$ remains constant and thus $(\gamma,\dot{\gamma})$ defines
a flow in $SM$. If $\lambda=0$, we obtain the geodesic flow of $g$, the prototype example of a conservative dynamical system. If $\lambda$ only depends on position (i.e. it is the pull-back of a function on $M$), we still obtain a volume preserving flow (a magnetic flow), but the situation changes if $\lambda$ is allowed to depend on velocities. For instance we may take $\lambda$ as the restriction to $SM$ of a 1-form on $M$ and in that case
we obtain a {\it Gaussian thermostat} as studied in \cite{W1,W2}. In general, these flows are not volume preserving and here we are concerned with thermostat flows as defined by \eqref{eq:thermo} when
$\lambda$ arises from a higher order differential on $M$.

The dynamical properties that we shall investigate are {\it hyperbolicity} and {\it domination}.  Hyperbolicity has played a prominent role in dynamics \cite{MR228014}, but weaker forms of hyperbolicity, like domination have in recent decades come under intense focus \cite{MR2105774}. The notion of dominated splitting was introduced
by Ma\~n\'e in the context of the proof of the stability conjecture (cf.~\cite{MR3457601}), but it has appeared in several other contexts and under different names. It can be regarded as a projective form of hyperbolicity
and it can also be characterized in terms of the singular value decomposition of the linear Poincar\'e flow \cite{MR2529495}.
The notion is particularly relevant in our setting: for volume preserving flows on 3-manifolds domination is equivalent to hyperbolicity, but for dissipative thermostats this is no longer the case.  Thus in the results below
some effort will be spent in studying when we can upgrade our flows from having a dominatted splitting to being
Anosov.

Let us give the benchmark example that motivates
our construction. Let $A$ be a {\it holomorphic} cubic differential on $M$ so that $\ov{\partial}A=0$, and suppose the pair $(g,A)$ is linked
by the additional equation
\[K_{g}=-1+2|A|^2_{g},\]
where $K_{g}$ is the Gauss curvature. By the work of Labourie~\cite{MR2402597} and Loftin \cite{MR1828223}, such a pair gives rise to a properly convex projective structure on $M$ and hence to an associated divisible strictly convex set $\tilde{M}\subset \mathbb{RP}^2$. The set $\tilde{M}$ comes equipped with a distance function, the so-called \emph{Hilbert metric} --- see for instance~\cite{MR3329885} for details --- while $g$ is known as the \emph{Blaschke metric}. The Hilbert metric is the distance function of a Finsler metric whose geodesic flow is known to be Anosov~\cite{MR2094116}. If we choose $\lambda$ to be the imaginary part of $A$ --- regarded as a function on $SM$ --- then the thermostat flow determined by \eqref{eq:thermo} is a suitable reparametrization of the geodesic flow of the Hilbert metric. While the work of Labourie interprets the pair
of equations $\ov{\partial}A=0$ and $K_{g}=-1+2|A|^2_{g}$ as an instance of Hitchin's Higgs bundle equations~\cite{MR887284}, they may also be interpreted as an example of the so-called Abelian vortex equations~\cite{MR2911018}. One can, in fact, consider similar equations for differentials of any order, not just 3, and investigate the dynamical properties of the associated thermostat. This was done in \cite{GabrielThomas_Thermo}, but here we uncover a larger landscape that allows for example the consideration of holomorphic differential of {\it fractional order}, i.e. holomorphic sections of $K^{m/n}$ where $K$ is the canonical line bundle of $(M,g)$.
The natural habitat of our thermostats is not the unit sphere bundle anymore, but rather {\it root bundles}
covering $SM$ to accommodate for the fractional degrees.

\subsection{Vortices} We now proceed to describe in detail the geometric setting for our pair
of PDEs.

Let $L \to M$ be a complex line bundle of positive degree $\deg(L)$. For a triple consisting of a Hermitian bundle metric $\hmet$ on $L$, a del-bar operator $\ov{\partial}_L$ on $L$ and a $(1,\! 0)$-form $\varphi$ on $M$ with values in $L$, we consider the following pair of equations
\begin{equation}\label{eq:vortexintro}
R(\D)+\frac{1}{2}\varphi\wedge\varphi^*+\imag \ell\Omega_g=0 \qquad \text{and} \qquad \ov{\partial}_L\varphi=0. 
\end{equation}
Here we write $\ell:=\deg(L)/|\chi(M)|$, $\D$ denotes the Chern connection on $L$ with respect to $(\hmet,\ov{\partial}_L)$, $R(\D)$ its curvature, $\Omega_g$ the area form of $g$ and $\varphi^*:=\hmet(\cdot,\varphi)$. We assume $\hmet$ to be conjugate linear in the second variable, so that $\varphi^*$ is a $(0,\! 1)$-form on $M$ with values in the dual $L^{-1}$ of $L$. 

The pair~\eqref{eq:vortexintro} of equations are a minor variation of the Abelian vortex equations on a Riemann surface, hence we refer to them as~\textit{vortex equations} as well. The usual Abelian vortex equations concern a triple $(\mathcal{h},\ov{\partial}_{L^{\prime}},\Phi)$, where $\Phi$ is a section of a complex line bundle $L^{\prime}$ over an oriented Riemannian $2$-manifold $(M,g)$. Besides $\Phi$ being holomorphic, one requests that the Chern connection $\D$ determined by $(\mathcal{h},\ov{\partial}_{L^{\prime}})$ satisfies
\begin{equation}\label{eq:usualvortex}
\imag\Lambda R(\D)+\frac{1}{2}\Phi\otimes \Phi^*-\frac{c}{2}=0,
\end{equation}
where $c$ is some real constant and $\Lambda$ denotes the $L^2$-adjoint of wedging with the area form $\Omega_g$. The Abelian vortex equations are a modification of the Ginzburg--Lan\-dau model for superconductors and were first studied by Noguchi~\cite{MR907998} and Bradlow~\cite{MR1086749} (for background, see also~\cite{MR614447}). A general framework for the so-called symplectic vortices over closed Riemann surfaces was described in~\cite{MR1959059}.   

\subsection{Vortex thermostats} Since $L$ has positive degree and $\chi(M)<0$, there exist unique positive coprime integers $(m,n)$ so that we have an isomorphism $L^n\simeq K^m$ of complex line bundles. We fix an $n$-th root $SM^{1/n}$ of the unit tangent bundle $\pi : SM \to M$ of $(M,g)$. By this we mean a principal $\mathrm{SO}(2)$-bundle $\pi_n : SM^{1/n} \to M$ which is an equivariant $n$-fold cover of $\pi: SM \to M$, see~\cref{subsec:roots} below for details. 

Following~\cite{Bry}, we call three linearly independent vector fields $(X,H,V)$ on a smooth $3$-manifold $N$ a~\textit{generalised Riemannian structure}, if they satisfy the commutator relations 
\[
[V,X]=H,\qquad [V,H]=-X, \qquad [X,H]=K_g V,\]
for some smooth function $K_g$ on $N$. A~\textit{(generalised) thermostat} is a flow $\phi$ on $N$ which is generated by a vector field of the form $X+\lambda V$, where $\lambda$ is a smooth function on $N$. The root $SM^{1/n}$ is equipped with a generalised Riemannian structure by pulling back the natural Riemannian structure on $SM$ determined by $g$ and the orientation (where $X$ is the geodesic vector field and $V$ the vertical vector field).  In~\cref{sec:vortextothermo} we show how to associate a thermostat to a solution $(\hmet,\ov{\partial}_L,\varphi)$ of the vortex equations on $L \to (M,g)$. We call such flows~\textit{vortex thermostats}. 

In the special case where $L$ is the canonical bundle equipped with its standard complex structure and Hermitian metric induced by $g$ and where $\varphi$ vanishes identically, the vortex equations~\eqref{eq:vortexintro} are equivalent to $g$ being hyperbolic. The case where $g$ has non-constant negative Gauss curvature can be dealt with by modifying the complex structure on $K$. In particular, suitably reparametrised, our family of flows include the geodesic flow of metrics of negative Gauss curvature and more generally the so-called W-flows of Wojtkowski~\cite{W1,W2}  (in the case of negative curvature, c.f.~\cite[Remark 4.10]{GabrielThomas_Thermo}).

\subsection{Results} Our goal is to establish hyperbolicity properties for the general class of vortex thermostats.
We first show:

\begin{introthm}\label{mainthmintro:domsplit}
Every vortex thermostat admits a dominated splitting. More\-over, if all closed orbits of $\phi$ are hyperbolic saddles, then $\phi$ is Anosov.
\end{introthm}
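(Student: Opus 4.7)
On $SM^{1/n}$ the generator of $\phi$ takes the form $F=X+\lambda V$, where $(X,H,V)$ is the generalised Riemannian structure pulled back from $SM$. My plan is to reduce the construction of an invariant splitting for $\phi$ to a scalar Riccati equation along orbits, use the vortex equations~\eqref{eq:vortexintro} to produce two uniformly separated solutions, and then invoke a $3$-dimensional dynamics dichotomy for the Anosov upgrade. Using the commutator relations $[V,X]=H$, $[V,H]=-X$, $[X,H]=K_g V$, a direct computation gives
\[
[F,H+rV]=-\lambda F-r(H+rV)+\bigl(F(r)+r^2-rV(\lambda)+\mathbb{K}_\lambda\bigr)V,
\]
where $\mathbb{K}_\lambda:=K_g+\lambda^2-H(\lambda)$. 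Consequently the line field $\R(H+rV)$ is $\D\phi$-invariant if and only if $r\in C^\infty(SM^{1/n})$ solves the Riccati equation
\[
F(r)+r^2-rV(\lambda)+\mathbb{K}_\lambda=0,
\]
and a dominated splitting of the form $\R(H+r^+V)\oplus\R F\oplus\R(H+r^-V)$ amounts to producing continuous solutions $r^\pm$ with $\inf_{SM^{1/n}}(r^+-r^-)>0$.

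The second step is to exploit both components of~\eqref{eq:vortexintro} to control the Riccati coefficients. On $SM^{1/n}$, the function $\lambda$ arising from $\varphi$ via the identification $L^n\simeq K^m$ admits a transparent Fourier expansion along the $\mathrm{SO}(2)$-fibres of $\pi_n$. The holomorphicity $\ov{\partial}_L\varphi=0$ should translate into algebraic identities linking $X\lambda$, $H\lambda$ and $V\lambda$, while the curvature equation combined with the positivity $\ell=\deg(L)/|\chi(M)|>0$ should force $\mathbb{K}_\lambda$ to split as a strictly negative term plus a non-negative correction controlled by $|\varphi|^2_{\hmet}$. From these identities I expect to extract two explicit candidate functions $r^\pm$, uniformly separated on $SM^{1/n}$, around which thin cones $\mathcal{C}^{\pm}$ can be placed; checking strict invariance $\D\phi_t(\mathcal{C}^+)\subset\mathrm{int}\,\mathcal{C}^+$ for $t>0$, and the time-reversed statement for $\mathcal{C}^-$, reduces via the Riccati equation to a pointwise inequality on the coefficients, after which a standard graph-transform/limit procedure produces honest continuous solutions $r^\pm$ and thus the claimed dominated splitting.

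For the "moreover" clause I would appeal to a classical dichotomy for non-singular flows on closed $3$-manifolds: a $C^1$ flow admitting a dominated splitting $E^u\oplus\R F\oplus E^s$ is Anosov as soon as every one of its closed orbits is a hyperbolic saddle (in the spirit of the results surveyed in~\cite{MR2105774}). In the presence of a dominated splitting, any failure of uniform hyperbolicity along $E^u$ or $E^s$ would, by an ergodic closing argument, be witnessed by a periodic orbit carrying a vanishing Lyapunov exponent along the corresponding sub-bundle, which is precisely the scenario ruled out by the hypothesis.

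I expect the middle step to be the main obstacle: turning the algebro-differential content of the vortex equations into sharp pointwise sign and separation control on $\mathbb{K}_\lambda$ and $V(\lambda)$ uniformly on $SM^{1/n}$. It is there that the fractional structure $L^n\simeq K^m$ and the strict positivity of $\ell$ must conspire so that the quadratic $r\mapsto r^2-rV(\lambda)+\mathbb{K}_\lambda$ has two real roots separated by a uniform gap at every point of $SM^{1/n}$; away from such an identity the cone argument would collapse.
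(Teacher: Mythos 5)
Your general framework is the right one, and it matches the machinery the paper sets up in Section~3: the commutator identity you derive is correct, your $\mathbb{K}_\lambda$ is exactly $\kappa$ from \eqref{eq:defkappa}, the reduction of invariance to the Riccati equation $Fr+r^2-rV\lambda+\kappa=0$ is the paper's, and invariant line fields transverse to $V$ do yield domination (this is \cref{prop:cs}, which itself needs Wojtkowski's quadratic-form results). The genuine gap is the step you yourself flag as the main obstacle, and your proposed substitute for it is not just missing but wrong. You expect the vortex equations to force the \emph{pointwise} quadratic $r\mapsto r^2-rV(\lambda)+\kappa$ to have two uniformly separated real roots. That condition is neither available nor sufficient. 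It is not sufficient because cone invariance is not a pointwise matter: for cones centered at non-constant functions $u^\pm$ the invariance inequalities are $F(u^+)+(u^+)^2-u^+V\lambda+\kappa>0$ and $F(u^-)+(u^-)^2-u^-V\lambda+\kappa<0$, which contain the transport terms $F(u^\pm)$; with constant boundaries these terms vanish, but then you need the root intervals to have a \emph{common} point globally, not merely to be nonempty at each point. And it need not hold, because for vortex thermostats negativity is achieved only after conjugation: the paper (proof of \cref{mainthm:domsplit}) takes the explicit gauge function $p=\theta+\V a/(1+\ell)$ and computes, using $\V\V a=-(1+\ell)^2a$, $\V\V\theta=-\theta$, the curvature equation \eqref{eq:keyidcurv} and--crucially--the holomorphicity equation \eqref{eq:keyidhol} to evaluate the derivative term $Fp$, that $\kappa_p=\kappa+Fp+p(p-V\lambda)\equiv -1$; then \cref{thm:workhorse} applies. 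Unwinding this, $(V\lambda)^2-4\kappa=(2p-V\lambda)^2+4+4Fp$, which has no definite sign, so no pointwise identity in $\kappa$ and $V\lambda$ alone can carry the argument: the whole point is that holomorphicity converts the non-pointwise term $Fp$ into algebra.

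The ``moreover'' clause has a second gap: the ``classical dichotomy'' you invoke is false as stated. For a $C^1$ flow on a closed $3$-manifold with a dominated splitting and all closed orbits hyperbolic saddles, \cite[Theorem B]{AR-H} yields only $N=\Lambda\cup\mathcal{T}$, where $\Lambda$ is hyperbolic and $\mathcal{T}$ is a finite union of normally hyperbolic \emph{irrational} tori. Such tori carry no closed orbits at all, so the hypothesis on closed orbits says nothing about them, and your ergodic-closing heuristic cannot reach them: closing lemmas produce periodic orbits of $C^1$-perturbed flows, not of the given flow, so a failure of hyperbolicity concentrated on an invariant irrational torus is never ``witnessed'' by a periodic orbit of $\phi$ itself. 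The tori must be excluded by a separate geometric argument, which is exactly what the paper does: by domination each torus is tangent to $\tilde{E}^s$ or $\tilde{E}^u$, both of which are transverse to $V$, hence $\pi_n$ restricted to such a torus is a local diffeomorphism onto $M$, which is absurd since a torus cannot cover a surface with $\chi(M)<0$. Without this step (or some replacement using the specific structure of $SM^{1/n}$), the Anosov upgrade does not follow.
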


The choice of an $n$-th root $SM^{1/n}$ of $SM$ gives a corresponding $n$-th root $K^{1/n}$ of $K$ and hence an isomorphism $\mathcal{Z} : L \to  K^{m/n}$ of complex line bundles. While $\mathcal{Z}$ is in general  not an isomorphism of holomorphic line bundles, we can upgrade~\cref{mainthmintro:domsplit} as follows:
\begin{introthm}\label{mainthmintro:anosov}
Suppose $\mathcal{Z} : L \to K^{m/n}$ is an isomorphism of holomorphic line bundles, then the associated vortex thermostat is Anosov. 
\end{introthm}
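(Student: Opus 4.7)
By~\cref{mainthmintro:domsplit} the flow $\phi$ already admits a dominated splitting, so it is enough to prove that every closed orbit of $\phi$ is a hyperbolic saddle; the dominated splitting then automatically upgrades to an Anosov splitting. The strategy I would follow is in fact to establish uniform hyperbolicity of \emph{all} orbits at once, by exhibiting along each orbit of $\phi=X+\lambda V$ a scalar Jacobi equation whose ``effective curvature'' is strictly negative under the holomorphic-isomorphism hypothesis. Negativity then forces the associated Riccati equation to admit two globally defined transverse solutions, which identify one-dimensional contracting and expanding line fields along the orbit.

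The key use of the hypothesis is the following. Under the isomorphism $\mathcal{Z}:L\to K^{m/n}$ being holomorphic, the operator $\ov{\partial}_L$ coincides with the canonical $\ov{\partial}$-operator on $K^{m/n}$ induced by the complex structure of $M$, and the $L$-valued $(1,0)$-form $\varphi$ becomes a genuine holomorphic section $\psi$ of $K^{m/n+1}$ on $M$. Pulled back to $SM^{1/n}$, this $\psi$ is a smooth complex-valued function of well-defined fractional Fourier degree with respect to $V$, and by construction the driving function $\lambda$ on $SM^{1/n}$ is (a real multiple of) its imaginary part. The holomorphicity of $\psi$ then translates into a precise relation between the $X$-, $H$- and $V$-derivatives of $\lambda$ on $SM^{1/n}$, which is what ultimately controls the sign of the effective curvature.

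With this at hand, I would use the commutator relations of the generalised Riemannian structure $(X,H,V)$ on $SM^{1/n}$ to derive the perpendicular Jacobi equation
\[
\ddot y+\mathcal{K}\,y=0
\]
along orbits of $\phi$, where $\mathcal{K}$ is a ``modified Gauss curvature'' built from $K_g$, $\lambda$, and the first and second order derivatives $H\lambda$, $V\lambda$, $X(V\lambda)$. Substituting the vortex equation~\eqref{eq:vortexintro} --- which expresses $K_g$ pointwise in terms of $|\varphi|^2_{\hmet}$ via $\mathrm{i}\, R(\D)=-\tfrac{1}{2}\varphi\wedge\varphi^*-\mathrm{i}\ell\,\Omega_g$ --- and using $\ov{\partial}\psi=0$ to rewrite the mixed derivatives of $\lambda$, the expression for $\mathcal{K}$ should collapse to the strictly negative constant $-1$ after the natural normalisation. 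A standard Riccati-comparison argument then yields uniform hyperbolicity of $\phi$, so that in particular all closed orbits are hyperbolic saddles and \cref{mainthmintro:domsplit} applies.

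The principal obstacle is the last simplification: in the merely smooth setting, $\ov{\partial}_L\varphi=0$ does not force $\psi$ to be $\ov{\partial}$-holomorphic in the canonical sense, and the discrepancy --- measurable as a $(0,1)$-form defect of $\mathcal{Z}$ --- injects additional lower-order contributions into the $H$- and mixed derivatives of $\lambda$ that can destroy the negativity of $\mathcal{K}$. Verifying cleanly that holomorphicity of $\mathcal{Z}$ is precisely what eliminates these contributions, and tracking how the fractional Fourier degree $m+n$ on $SM^{1/n}$ enters the final cancellation, is the technical heart of the argument.
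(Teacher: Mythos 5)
Your reduction to closed orbits via \cref{mainthmintro:domsplit} is legitimate, but the argument you then sketch does not prove what you claim. For a thermostat the variational equation along an orbit is not $\ddot y+\mathcal{K}\,y=0$ but
\[
\ddot y-(V\lambda)\dot y+\kappa y=0,\qquad \kappa=K_g-H\lambda+\lambda^2,
\]
and the damping term $-(V\lambda)\dot y$ is exactly the dissipative part of the flow. The computation you outline --- substituting the vortex equations and the holomorphicity relation so that the ``effective curvature'' collapses to $-1$ --- is precisely the computation $\kappa_p=-1$ in the proof of \cref{mainthm:domsplit}, and it yields only a \emph{dominated splitting}, not hyperbolicity. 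Once the first-order term is gauged away, the resulting hyperbolic $\mathrm{SL}(2,\R)$-cocycle differs from the actual derivative cocycle by the scalar factor $\mathrm{e}^{\frac{1}{2}\int_0^t V\lambda}$ (\cref{prop:cs}), and this factor can destroy the contraction and expansion: for dissipative flows on $3$-manifolds domination is strictly weaker than the Anosov property. A telltale sign of the gap is that your mechanism does not use the hypothesis on $\mathcal{Z}$ in an essential way: $\kappa_p=-1$ holds for \emph{every} vortex thermostat, including those with $\theta\not\equiv 0$ (the twisted equation $\ov{\partial}A=\ell\,\theta^{0,1}\otimes A$ suffices for the cancellation, contrary to your concluding worry), so your argument, if valid, would show that all vortex thermostats are Anosov --- which the paper explicitly states is not known.

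What is missing is the quantitative step that upgrades domination to hyperbolicity, and this is where holomorphicity of $\mathcal{Z}$ actually enters. The paper invokes \cref{lemma:alternative}: it suffices to verify either (1) $r^u>0$ and $r^s<0$, or (2) the sign conditions on $V\lambda-p-\kappa_p/(r^{u,s}-p)$, for the Riccati solutions $r^{u,s}$ attached to the splitting. Verifying these requires two ingredients absent from your sketch: \cref{lemma:boundA}, which uses $\ov{\partial}A=0$ (this is where $\theta\equiv 0$ is crucial) to obtain $K_g<0$ and hence the pointwise bound $|\V a|<(1+\ell)/\sqrt{\ell}$; and \cref{lemma:fullbound}, which pins the positive Hopf solution $\h$ of $F\h+\h^2+B\h-1=0$, with $B=\frac{1-\ell}{1+\ell}\V a$, between $\frac{-c+\sqrt{c^2+4}}{2}$ and $\frac{c+\sqrt{c^2+4}}{2}$ by comparison with constant-coefficient Riccati equations along the Hopf limiting procedure. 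Combining the two, one checks condition (1) when $\ell\geqslant 1$ (via $\h\geqslant 1/\sqrt{\ell}$) and condition (2) when $0<\ell\leqslant 1$ (via $\h\leqslant 1/\sqrt{\ell}$); the necessity of the second alternative in the fractional range is stressed in \cref{rem:upgrade}. As it stands, your proposal reproves \cref{mainthmintro:domsplit} rather than the Anosov property.
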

We do not know if there is a vortex thermostat which is not Anosov.

As in the case of the usual vortex equations, the equations~\eqref{eq:vortexintro} are invariant under a suitable action of the complex gauge group of $L$, that is, the group $\mathrm{G}_{\C}$ of automorphisms of $L$. We show that by possibly applying a complex gauge transformation, we can assume without losing generality that $\hmet=\hmet_0$, where $\hmet_0$ denotes the natural Hermitian bundle metric on $L\simeq K^{m/n}$ determined by $g$. The $1$-form $\varphi$ is a section of $K\otimes L\simeq K^{1+\ell}$ and hence we may think of $\varphi/\ell$ as a differential $A$ of fractional degree $1+\ell>1$. Furthermore, since $K^{m/n}\simeq L$ as complex line bundles, there exists a unique $1$-form $\theta$ on $M$ so that $\ov{\partial}_{K^{m/n}}-\ov{\partial}_L=\ell\theta^{0,1}$, where $\theta^{0,1}$ denotes the $(0,\! 1)$-part of $\theta$. By construction, the above isomorphism $\mathcal{Z}$ of complex line bundles is an isomorphism of holomorphic line bundles if and only if $\theta$ vanishes identically. In terms of the triple $(g,A,\theta)$ the vortex equations~\eqref{eq:vortexintro} are equivalent to
\[
K_g-\delta_g\theta=-1+\ell |A|^2_g \qquad \text{and}\qquad \ov{\partial}A=\ell\,\theta^{0,1}\otimes A,
\]
where $|\cdot|_g$ denotes the pointwise norm induced on $K^{1+\ell}$ by $g$ and $\delta_g$ the co-differential. Thus, we recover the main equations from~\cite{GabrielThomas_Thermo} (see also~\cite{MR3987443}), but now in the more general setting of fractional differentials. In particular, \cref{mainthmintro:domsplit} and \cref{mainthmintro:anosov} above generalise the results from~\cite{GabrielThomas_Thermo} to the case of differentials of fractional degree.  
Proving the Anosov property for fractional differential presents new obstacles, particularly those in the range $0<\ell< 1$. 

As in~\cite{GabrielThomas_Thermo}, our flows do not preserve a volume form, unless $\varphi$ vanishes. More precisely, the proof of \cite[Theorem 5.5]{GabrielThomas_Thermo} shows that under the hypotheses of \cref{mainthmintro:anosov} the associated vortex thermostat $\phi$ preserves an absolutely continuous measure if and only if $\varphi$ vanishes identically. This property implies that vortex thermostats as in \cref{mainthmintro:anosov} with $\varphi\neq 0$ have positive entropy production and thus they provide  interesting models in nonequilibrium statistical mechanics
\cite{MR1812682,MR1489572,MR1705592}. The moduli space of gauge equivalence classes of solutions of the usual vortex equations was  described in~\cite[Theorem 4.6]{MR1086749}, we expect a similar statement to hold as well in the case considered here; this may be taken up elsewhere. 

In~\cref{app:gencase} we briefly discuss the dominated splitting property for a thermostat that one can associate to the usual vortex equations.  

\subsection*{Acknowledgements} We are grateful to Miguel Paternain, Rafael Potrie and the anonymous referee for helpful comments. TM was partially funded by the priority programme SPP 2026 ``Geometry at Infinity'' of DFG. GPP was partially supported by EPSRC grant EP/R001898/1.

\section{Preliminaries}

\subsection{The unit tangent bundle}

Let $(M,g)$ be an oriented Riemannian $2$-manifold and let $\pi : SM \to M$ denote its unit tangent bundle. Recall that $SM$ is equipped with a coframing consisting of three linearly independent $1$-forms $(\un{\omega_1},\un{\omega_2},\un{\psi})$. The $1$-forms $(\un{\omega_1},\un{\omega_2})$ span the $1$-forms on $SM$ that are semibasic for the basepoint projection $\pi$, that is, the forms that vanish when evaluated on vertical vector fields. Explicitly, we have for all $(x,v) \in SM$ and $\xi \in T_{(x,v)}SM$
\[\un{\omega_1}(\xi)=g(d\pi(\xi),v)\quad\text{and}\quad \un{\omega_2}(\xi)=g(d\pi(\xi),Jv),\]
where $J : TM \to TM$ denotes rotation by $\pi/2$ in counter-clockwise direction with respect to the fixed orientation. The third $1$-form $\un{\psi}$ is the Levi-Civita connection form of $g$ so that we have the structure equations
\[\d\un{\omega_1}=-\un{\omega_2}\wedge\un{\psi},\qquad \d\un{\omega_2}=-\un{\psi}\wedge\un{\omega_1},\qquad \d\un{\psi}=-K_g\un{\omega_1}\wedge\un{\omega_2},\]
where $K_g$ denotes the (pullback to $SM$ of the) Gauss curvature of $g$. Denoting by $(\un{X},\un{H},\un{V})$ the vector fields dual to $(\un{\omega_1},\un{\omega_2},\un{\psi})$, the structure equations imply the commutator relations
\begin{equation}\label{eq:genstruceq}
[\un{V},\un{X}]=\un{H},\qquad [\un{V},\un{H}]=-\un{X}, \qquad [\un{X},\un{H}]=K_g\un{V}.
\end{equation}
The vector field $\un{X}$ is the geodesic vector field of $(M,g)$ and $\un{V}$ is the generator of the $\mathrm{SO}(2)$ right action on $SM$ which we denote by $R_{\mathrm{e}^{\imag\vartheta}}$ for $\mathrm{e}^{\imag\vartheta} \in \mathrm{SO}(2)$. 

Note that a complex-valued $1$-form on $M$ that is a $(1,\! 0)$-form with respect to the Riemann surface structure defined by $J$ pulls back to $SM$ to become a complex multiple of the form $\un{\omega}:=\un{\omega_1}+\imag \un{\omega_2}$. The form $\un{\omega}$ satisfies the equivariance property $(\rot)^*\un{\omega}=\mathrm{e}^{-\imag\vartheta}\un{\omega}$ for all $\mathrm{e}^{\imag\vartheta} \in \mathrm{SO}(2)$ and hence a section $\beta$ of the canonical bundle $K$ of $M$ is represented by a complex-valued function $\bm\beta$ on $SM$ satisfying the equivariance property $(\rot)^*\bm\beta=\mathrm{e}^{\imag\vartheta}\bm\beta$. To recover the associated $(1,\! 0)$-form on $M$, we observe that $\bm\beta\un{\omega}$ is semi-basic and invariant under the $\mathrm{SO}(2)$-right action, hence the pullback of a unique $(1,\! 0)$-form on $M$, which is $\beta$. 

\begin{Remark}[Notation]
We write $Y(f)$ for the (Lie-)derivative of a smooth real -- or complex-valued function $f$ in the direction of a vector field $Y$. Whenever no confusion is possible about the argument of the linear differential operator $Y$, we will simply write $Yf$ instead of $Y(f)$.  
\end{Remark} 

\subsection{Roots of the unit tangent bundle}\label{subsec:roots}

Let $n \in \mathbb{N}$ and $\pi_{n} : SM^{1/n} \to M$ be a principal right $\mathrm{SO}(2)$-bundle whose right action we denote by $R_{\mathrm{e}^{\imag\vartheta}}$ as well. Let $\pi : SM \to M$ denote the unit tangent bundle of the oriented Riemannian $2$-manifold $(M,g)$ and $(\un{\omega_1},\un{\omega_2},\un{\psi})$ its coframing. We call $\pi_{n} : SM^{1/n} \to M$ an~\textit{$n$-th root of $SM$} if there exists an $n$-fold covering map $\rho : SM^{1/n} \to SM$ so that $\pi_{n}=\pi \circ \rho$ and so that
\[
\rho\circ R_{\mathrm{e}^{\imag\vartheta}}=R_{\mathrm{e}^{\imag n\vartheta}}\circ \rho
\]
for all $\mathrm{e}^{\imag\vartheta} \in \mathrm{SO}(2)$. We refer the reader to~\cite{MR2945757} for background about $n$-th roots of $SM$. We write $\omega_i=\rho^*\un{\omega_i}$ and $\psi=\rho^*\un{\psi}$ and let $(X,H,\V)$ denote the framing dual to $(\omega_1,\omega_2,\psi)$ on $SM^{1/n}$. The structure equations imply the usual commutator relations
\begin{equation}\label{eq:comrelation}
[\V,X]=H,\qquad [\V,H]=-X, \qquad [X,H]=K_g\V.
\end{equation}
Recall that a section $\beta$ of the canonical bundle $K$ of $(M,g)$ is represented by a complex-valued function $\bm\beta$ on $SM$ satisfying the equivariance property $(\rot)^* \bm\beta=\mathrm{e}^{\imag\vartheta} \bm\beta$. Writing $\tilde{ \bm\beta}:=\bm\beta\circ \rho$, the function $\tilde{\bm\beta}$ satisfies $\rot^*\tilde{\bm\beta}=\mathrm{e}^{\imag n\vartheta}\tilde{\bm\beta}$ 
and hence we obtain a $n$-th root $K^{1/n}$ of $K$ whose sections are represented by complex-valued functions $\f$ on $SM^{1/n}$ satisfying $(\rot)^* \f=\mathrm{e}^{\imag\vartheta} \f$ for all $\mathrm{e}^{\imag\vartheta} \in \mathrm{SO}(2)$. 
Likewise, for each $m \in \mathbb{Z}$, the smooth sections of $K^{m/n}$ are represented by smooth complex-valued functions $\f$ on $SM^{1/n}$ satisfying 
\begin{equation}\label{eq:rightactionup}
(\rot)^* \f=\mathrm{e}^{\imag m\vartheta} \f
\end{equation}
for all $\mathrm{e}^{\imag\vartheta} \in \mathrm{SO}(2)$. In particular, for each $m \in \mathbb{Z}$ we obtain a Hermitian bundle metric $\hmet_0$ on $K^{m/n}$ defined by
\[
(\f_1,\f_2) \mapsto \f_1\ov{\f_2},
\]
where $\f_1,\f_2$ represent sections of $K^{m/n}$.  

Furthermore, observe that by definition, $\V$ is only $(1/n)$-th of the generator $V$ of the $\mathrm{SO}(2)$-action on $SM^{1/n}$. As a consequence, the infinitesimal version of~\eqref{eq:rightactionup} becomes
\begin{equation}\label{eq:rightactionupinf}
\V\f=\frac{1}{n}V\f=\imag\left(\frac{m}{n}\right)\f 
\end{equation}
and hence the map
\[
\f \mapsto \d\f-\imag\left(\frac{m}{n}\right)\psi \f 
\]
equips $K^{m/n}$ with a connection $\nabla$ whose connection form is $-\mathrm{i}(m/n)\psi$. The $(0,\! 1)$-part $\nabla^{\prime\prime}$ of $\nabla$ equips $K^{m/n}$ with a holomorphic line bundle structure $\ov{\partial}_{K^{m/n}}$, so that $\nabla$ is the Chern connection of the Hermitian holomorphic line bundle $(K^{m/n},\ov{\partial}_{K^{m/n}},\hmet_0)$. 

Finally, note that applying $\V$ again to~\eqref{eq:rightactionupinf} shows that we may write $\f=\frac{n\V b}{m}+\imag b$ for a unique real-valued function $b$ on $SM^{1/n}$ satisfying $\V\V b=-\left(\frac{m}{n}\right)^2 b$. Conversely, if a smooth real-valued function $b$ on $SM^{1/n}$ satisfies $\V\V b=-(\frac{m}{n})^2 b$, then $\f:=\frac{n\V b}{m}+\imag b$ represents a smooth section $B$ of $K^{m/n}$.  

\subsection{Thermostats} 

Let $N$ be a smooth $3$-manifold equipped with three smooth vector fields $(X,H,V)$ that are linearly independent at each point of $N$. Following~\cite{Bry} we define:
\begin{Definition}
We say $N$ carries a~\textit{generalised Riemannian structure} if $(X,H,V)$ satisfy the commutator relations
\begin{equation}\label{eq:bracket}
[V,X]=H,\qquad [V,H]=-X, \qquad [X,H]=K_g V,
\end{equation}
for some smooth function $K_g$ on $N$.
\end{Definition} 

\begin{Example}\label{ex:genriemstruc}
Let $(M,g)$ be an oriented Riemannian $2$-manifold and $\pi_n : SM^{1/n} \to M$ an $n$-th root of its unit tangent bundle $\pi : SM \to M$. Then $(X,H,\V)$ defined as in \cref{subsec:roots} equip $N=SM^{1/n}$ with a generalised Riemannian structure. 
\end{Example}

Suppose $N$ carries a generalised Riemannian structure $(X,H,V)$ with dual $1$-forms $(\omega_1,\omega_2,\psi)$. 
\begin{Definition}
A~\textit{(generalised) thermostat} on $N$ is a flow $\phi$ generated by a vector field of the form $\F:=X+\lambda V$, where $\lambda \in C^{\infty}(N)$. 
\end{Definition}

\section{Dominated splittings and hyperbolicity}\label{sec:dom+hyp}

In this section we summarize the main dynamical set up that we shall use; in the first three subsections we follow closely the presentation in \cite{GabrielThomas_Thermo}. For background on the notion of dominated splittings we refer to \cite{CP2015}.

\subsection{Definitions}

Let $N$ be a smooth closed $3$-manifold and $\phi : N\times \R \to N$ a continuous flow. A~\textit{cocycle over $\phi$ with values in $\mathrm{GL}(2,\R)$} is a continuous map $\Psi : N \times \R \to \mathrm{GL}(2,\R)$ such that
\[
\Psi_{t_1+t_2}(x)=\Psi_{t_1}(\phi_{t_2}(x))\Psi_{t_2}(x)
\]
for all $t_1,t_2 \in \R$ and $x \in N$. Note that the cocycle condition ensures that on the trival vector bundle $E=N\times \R^{2}$ we obtain a continuous linear flow $\rho : E \times \R \to E$ by defining
\[
\rho_t((x,a))=(\phi_t(x),\Psi_t(x)a)
\]
for all $(x,a) \in E=N\times \R^2$ and $t \in \R$. 

We say $E$ admits a continuous $\rho$-invariant splitting if there exist continuous $\rho$-invariant line bundles $E^{s,u}$ so that $E=E^u\oplus E^s$. We fix a norm $|\cdot|$ on $\R^2$. 
\begin{Definition} The cocycle $\Psi$ is said to be~\textit{hyperbolic} is there exists a continuous $\rho$-invariant splitting $(E^s,E^u)$ and positive constants $C,\mu>0$ so that
\[
\Vert\left.\Psi_t(x)\right|_{E^s(x)}\Vert \leqslant C\mathrm{e}^{-\mu t}\quad\text{and}\quad \Vert\left.\Psi_{-t}(x)\right|_{E^u(x)}\Vert \leqslant C\mathrm{e}^{-\mu t}
\]
for all $x \in N$ and $t>0$.
\end{Definition} Here $\Vert\cdot\Vert$ denotes the operator norm induced on $\mathrm{Hom}(E^{s,u}(x),E^{s,u}(\phi_t(x)))$ by the norm $|\cdot |$, respectively. A weaker notion than that of hyperbolicity is to ask that for all $x \in N$, any direction not contained in the suspace $E^{s}(x)$ converges exponentially fast to $E^u(\phi_t(x))$ when applying $\rho_t(x)$. This condition is equivalent to the following notion:
\begin{Definition}
The cocycle $\Psi$ is said to admit a~\textit{dominated splitting} if there exists a continuous $\rho$-invariant splitting $(E^u,E^s)$ and positive constants $C,\mu>0$ so that
\begin{equation}\label{eq:domsplitting}
\Vert\left.\Psi_t(x)\right|_{E^s(x)}\Vert \Vert\left.\Psi_{-t}(\phi_t(x))\right|_{E^u(\phi_t(x))}\Vert\leqslant C\mathrm{e}^{-\mu t}
\end{equation}
for all $x \in N$ and $t>0$. 
\end{Definition}

\subsection{The derivative cocycle of a thermostat}
Suppose the closed $3$-manifold $N$ is equipped with a generalised Riemannian structure and a thermostat $\phi$ generated by the vector field $F=X+\lambda V$ as above. Using the bracket relations~\eqref{eq:bracket}, it is straightforward to derive the ODEs dictating the behavior of $d\phi_{t}$. Given an initial condition $\xi\in T_xN$ and if we write
\[d\phi_{t}(\xi)=w(t)F(\phi_t(x))+y(t)H(\phi_t(x))+u(t)V(\phi_t(x))\]
for real-valued functions $w,y,u$ on $\R$, then 
\begin{align}
\dot{w} &=\lambda\,y;\label{jac1}\\
\dot{y} &=u;\label{jac2}\\
\dot{u} &=V(\lambda)\dot{y}-\kappa y,\label{jac3}
\end{align}
where 
\begin{equation}\label{eq:defkappa}
\kappa:=K_g-H\lambda+\lambda^2.
\end{equation}

In order to associate a cocycle to a thermostat we consider the rank two quotient vector bundle $E=TN/\R F\simeq \R H\oplus \R V$. Elements in $E$ will be denoted by $[\xi]$, where $\xi \in TN$. The mapping $d\phi_t$ descends to define a mapping  
$$
\rho : \R\times E \to E, \quad (t,[\xi])\mapsto \rho(t,[\xi])=[d\phi_t(\xi)] 
$$
which satisfies $\rho_{t_1}\circ\rho_{t_2}=\rho_{t_1+t_2}$ for all $t_1,t_2 \in \R$. This is sometimes called the {\it linear Poincar\'e flow}. The basis of vector fields $(F,H,V)$ on $N$ defines a vector bundle isomorphism $TN \simeq N\times\R^3$ and consequently an identification $E\simeq N \times \R^2$. Therefore, we obtain a cocycle $\Psi : N \times \R \to \mathrm{GL}(2,\R)$ over $\phi$ by requiring that for each $t \in \R$ and all $(x,a) \in E$, we have
\[
\rho_t((x,a))=(\phi_t(x),\Psi_t(x)a).
\]
Explicitly,  $\Psi_t$ is the linear map whose action on $\mathbb{R}^2$ is 
$$
\Psi_t(x): \left( \begin{array}{c} y(0) \\ \dot{y}(0) \end{array} \right) \mapsto \left( \begin{array}{c} y(t) \\ \dot{y}(t) \end{array} \right)
$$
with 
\[\ddot{y}(t)-(V\lambda)(\phi_{t}(x))\dot{y}(t) + \kappa(\phi_t(x)) y(t) = 0.\]
Observe that for thermostats the $2$-plane bundle spanned by $H$ and $V$ is in general {\it not} invariant under $d\phi_{t}$.

The cocycle $\Psi_{t}$ is hyperbolic if and only if the thermostat flow $\phi_t$ is Anosov (cf. for instance \cite[Proposition 5.1] {W1}). We will say that $\phi_t$ admits a dominated splitting if $\Psi_{t}$ admits a dominated splitting. This is the natural notion for flows, see \cite[Definition 1]{AR-H}. For the case of flows on 3-manifolds, as it is our case, the existence of a dominated splitting can produce hyperbolicity if additional information on the closed orbits is available. Indeed \cite[Theorem B]{AR-H} implies that if all closed orbits of $\phi$ are hyperbolic saddles, then $N=\Lambda\cup\mathcal T$ where $\Lambda$ is a hyperbolic invariant set and $\mathcal T$ consists of finitely many normally hyperbolic irrational tori.

Flows with dominated splitting are also called {\it projectively Anosov flows}. 
We note that when the flow $\phi$ admits a dominated splitting we may write $TN=\tilde{E}^{s}+\tilde{E}^{u}$, where
$\tilde{E}^{s,u}$ are continuous plane bundles invariant under $d\phi_t$ and whose intersection is $\R F$.
In general they are integrable but unlike the Anosov case, they may not be uniquely integrable. Also note that
the irrational tori in $\mathcal T$ must be tangent to $\tilde{E}^{s}$ or $\tilde{E}^u$ due to the domination condition.
We refer to \cite{MR2766227} and references therein for a classification of these flows when the bundles $\tilde{E}^{s,u}$ are of class $C^2$ (in which case they do determine codimension one foliations of class $C^2$).

\subsection{Infinitesimal generators and conjugate cocycles}\label{subsection:cocycle} For a smooth cocycle $\Psi:N\times\R\to \mathrm{GL}(2,\R)$, we define its infinitesimal generator $\mathbb{B}:N\to \mathfrak{gl}(2,\R)$ as follows
\[\mathbb{B}(x):=-\left.\frac{\d}{\d t}\right|_{t=0}\Psi_{t}(x).\]
The cocycle $\Psi$ can be obtained from $\mathbb{B}$ as the unique solution to
\[\frac{\d}{\d t}\Psi_{t}(x)+\mathbb{B}(\phi_{t}(x))\Psi_{t}(x)=0,\;\;\;\Psi_{0}(x)=\mbox{\rm Id}.\]
In the case of thermostats, it is easy to check that we have
$$
\mathbb{B}=\begin{pmatrix} 0 & -1 \\ \kappa & -V\lambda\end{pmatrix}
$$
where $\kappa=K_g-H\lambda+\lambda^2$.
Given a gauge, that is, a smooth map $\mathcal{P}:N\to \mathrm{GL}(2,\R)$, we obtain a new cocycle by conjugation
\[\tilde{\Psi}_{t}(x)=\mathcal{P}^{-1}(\phi_{t}(x))\Psi_{t}(x)\mathcal{P}(x).\]
It is straightforward to check that the infinitesimal generator $\tilde{\mathbb{B}}$ of $\tilde{\Psi}_{t}$ is related to $\mathbb{B}$ by
\begin{equation}
\tilde{\mathbb{B}}=\mathcal{P}^{-1}\mathbb{B}\mathcal{P}+\mathcal{P}^{-1}F\mathcal{P}.
\label{eq:conjugate}
\end{equation}

Below we shall use gauges of a particular type. Consider a gauge transformation $\mathcal{P}:N\to \mathrm{GL}(2,\R)$ given by
\[\mathcal{P}=\begin{pmatrix} 1 & 0 \\ p & 1\end{pmatrix},\]
where $p$ is a smooth real-valued function on $N$. 
A computation using \eqref{eq:conjugate} shows that the conjugate cocyle $\tilde{\Psi}_{t}$ via $P$ has infinitesimal generator given by
\[\tilde{\mathbb{B}}=\begin{pmatrix} -p & -1 \\ \kappa_{p} & -V\lambda+p\end{pmatrix},\]
where $\kappa_{p}:=\kappa+Fp+p(p-V\lambda)$. Since the cocycles $\Psi_{t}$ and $\tilde{\Psi}_{t}$ are conjugate, they have the same dominated splitting/hyperbolicity properties, but the form of $\tilde{\mathbb B}$ will expose the origins of these properties when $\kappa_{p}<0$ (cf. \cite[Introduction]{W3}). In both cases, the trace of the matrix is $-V\lambda$ (minus divergence of $F$), giving an indication that $F$ may not preserve volume.


\subsection{Conditions ensuring domination and hyperbolicity}\label{subsection:ode}

We have~\cite[Theorem 3.7]{GabrielThomas_Thermo}:

\begin{Theorem} Let $N$ be a closed $3$-manifold that is equipped with a generalised Riemannian structure $(X,H,V)$ and a thermostat flow $\phi$ generated by $F=X+\lambda V$. Suppose there exists a smooth function $p:N\to\mathbb{R}$ such that
$$\kappa_{p}=\kappa+Fp+p(p-V\lambda)<0.$$ Then 
 $\phi$ admits a dominated splitting with $V\notin E^{s,u}$. 
 \label{thm:workhorse}
\end{Theorem}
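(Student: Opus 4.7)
The plan is to exploit the gauge freedom isolated in \cref{subsection:cocycle}. Given the function $p$ supplied by the hypothesis, I would conjugate $\Psi_t$ by
\[\mathcal{P}=\begin{pmatrix}1 & 0\\ p & 1\end{pmatrix}\]
to obtain the cocycle $\tilde\Psi_t$ whose infinitesimal generator
\[\tilde{\mathbb{B}}=\begin{pmatrix}-p & -1\\ \kappa_p & -V\lambda+p\end{pmatrix}\]
places $\kappa_p$ in its lower-left corner. Because continuous gauge conjugation preserves the notion of dominated splitting, and because $\mathcal{P}$ fixes the $V$-axis of every fibre of $E$ pointwise, it suffices to produce a dominated splitting for $\tilde\Psi_t$ whose bundles avoid this axis.

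Writing $v(t)=\tilde\Psi_t(x)v_0=(a(t),b(t))^{\top}$ in the basis dual to $(H,V)$, one has $\dot a=pa+b$ and $\dot b=-\kappa_p\,a+(V\lambda-p)b$, so the slope $s=b/a$ evolves by the Riccati equation
\[\dot s=-s^2+(V\lambda-2p)s-\kappa_p.\]
The algebraic observation driving the proof is that the indefinite quadratic form $Q(a,b):=ab$ satisfies $\dot Q=-\kappa_p\,a^2+b^2+V\lambda\,ab$; on the zero locus $\{Q=0\}$ (the union of the two coordinate axes) the mixed term vanishes and $\dot Q=-\kappa_p\,a^2+b^2$. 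By the strict inequality $\kappa_p<0$ and the compactness of $N$, there is a uniform $\delta>0$ with $-\kappa_p\geq\delta$, hence $\dot Q\geq\min(\delta,1)\|v\|^2$ whenever $Q(v)=0$. Equivalently, the two roots $s_\pm$ of the Riccati right-hand side straddle $0$ with uniform gap $s_+-s_-=\sqrt{(V\lambda-2p)^2-4\kappa_p}\geq 2\sqrt\delta$.

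A standard cone-field argument (in the spirit of Lewowicz and Wojtkowski) converts the uniform monotonicity of $Q$ on $\{Q=0\}$ into eventual strict forward, respectively backward, invariance of the cones $C^{\pm}=\{\pm Q\geq 0\}$ under $\tilde\Psi_t$. From this one extracts the continuous invariant line bundles
\[E^u(x)=\bigcap_{t\geq 0}\tilde\Psi_t\!\left(\phi_{-t}(x)\right)C^+(\phi_{-t}(x)),\qquad E^s(x)=\bigcap_{t\geq 0}\tilde\Psi_{-t}\!\left(\phi_t(x)\right)C^-(\phi_t(x)),\]
each lying strictly inside its cone, hence away from the $V$-axis. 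Conjugating back by $\mathcal{P}$ transports the splitting to $\Psi_t$; since $\mathcal{P}$ preserves the $V$-axis, the condition $V\notin E^{s,u}$ survives.

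The main technical obstacle is upgrading the pointwise inequality $\dot Q>0$ on $\{Q=0\}\setminus\{0\}$ to the uniform exponential estimate demanded by \eqref{eq:domsplitting}. Concretely, one must integrate the differential inequality for $Q$ along orbits to obtain exponential opening of the cones, and then convert this into the ratio estimate via the identity $\tfrac{d}{dt}\log|a|=p+s$ together with the uniform gap $s^u-s^s\geq 2\sqrt\delta$ between the two invariant slopes. The strictness $\kappa_p<0$ (rather than merely $\kappa_p\leq 0$) together with the compactness of $N$ is precisely what forces the domination rate $\mu$ to be uniform rather than Lyapunov-style.
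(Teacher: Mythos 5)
Your proposal is correct and follows essentially the same route as the paper: the paper proves \cref{thm:workhorse} by the gauge conjugation of \cref{subsection:cocycle} together with the quadratic-form argument of \cite[Theorem 3.7]{GabrielThomas_Thermo} (equivalently, as the authors remark, the cone-field criterion of \cite[Theorem 2.6]{CP2015}), which is precisely your computation with $Q(a,b)=ab$ in the conjugated frame. The ``technical obstacle'' you flag --- upgrading strict invariance of the cones $\{\pm ab\geqslant 0\}$ to the uniform estimate \eqref{eq:domsplitting} --- is exactly the content of that standard criterion (strict cone invariance on a compact base yields domination), so nothing essential is missing.
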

\begin{Remark}
More precisely, in~\cite[Theorem 3.7]{GabrielThomas_Thermo}, only the case of a thermostat on the unit tangent bundle of an oriented Riemannian $2$-manifold $(M,g)$ is considered. However, it is easy to check that the arguments in~\cite[Theorem 3.7]{GabrielThomas_Thermo} also prove~\cref{thm:workhorse}. 
In \cite{GabrielThomas_Thermo} we employed quadratic forms to establish this result; we could have used instead a cone-field criterion as described for instance in \cite[Theorem 2.6]{CP2015}. 
\end{Remark}

The fact that $V\notin E^{s,u}$ implies that there are uniquely defined continuous (H\"older in fact) functions $r^{s,u}:N\to \mathbb{R}$ such that $H+r^{s,u} V\in E^{s,u}$. The invariance of the bundles $E^{s,u}$ translates into Riccati equations for $r^{s,u}$ of the form:
\[Fr+r^{2}-rV\lambda+\kappa=0.\]
Observe that $h:=r-p$ satisfies the Riccati equation
\begin{equation}
Fh+h^{2}+h(2p-V\lambda)+\kappa_{p}=0.
\label{eq:ric2}
\end{equation}
Moreover, the functions $r^{u,s}$ can be constructed using a limiting procedure as follows.
Fix $x\in N$ and consider for each $R>0$, the unique solution $u_{R}$ to the Riccati equation
along $\phi_{t}(x)$
\[\dot{u}+u^2-uV\lambda+\kappa=0\]
satisfying $u_{R}(-R)=\infty$. Then
\begin{equation}
r^{u}(x)=\lim_{R\to \infty}u_{R}(0).
\label{eq:hopflimit}
\end{equation}
Note that $r^{u}(\phi_{t}(x))=\lim_{R\to\infty}u_{R}(t)$. 

Finally, under the assumption in \cref{thm:workhorse} that $\kappa_p<0$ we get the important additional information that $h^{u}:=r^{u}-p>0$ and $h^{s}:=r^{s}-p<0$. We call these the positive and negative Hopf solutions given that they play a similar role as the solutions introduced by E. Hopf in \cite{Hopf48} for the geodesic flow.

The property $V\notin E^{s,u}$ allows a convenient visualization of the domination condition in terms of the behaviour of solutions to the Riccati equation as depicted in \cref{Figure:riccati}. 
\begin{figure}[h]
\begin{overpic}[scale=0.45]{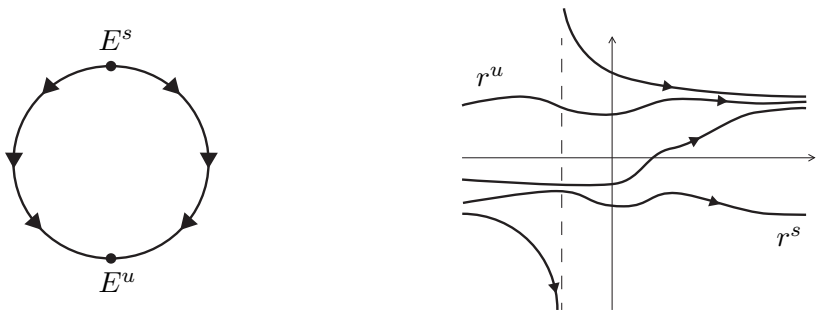}
\put(11.5,32.5){$E^s$}
\put(11.5,2.25){$E^u$}
\put(58,28){$r^u$}
\put(95,8){$r^s$}
\end{overpic}
\caption{Dominated splitting property}\label{Figure:riccati}
\end{figure}
The reader might find this figure useful when following some of the arguments below, particularly the proof of \cref{lemma:fullbound}. To prove that our flows are Anosov we shall use the following lemma that ``upgrades" the domination condition to hyperbolicity under additional information on the solutions $r^{s,u}$.

\begin{Lemma} Under the same assumptions as in \cref{thm:workhorse}, suppose in addition
that either
\begin{enumerate}
\item $r^{u}>0$ and $r^{s}<0$;
or
\item $V\lambda-p-\frac{\kappa_{p}}{r^{u}-p}>0$ and $V\lambda-p-\frac{\kappa_{p}}{r^{s}-p}<0$.
\end{enumerate}
Then $\phi_t$ is Anosov.
\label{lemma:alternative}
\end{Lemma}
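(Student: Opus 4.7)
The plan is to show that under either hypothesis the rate of expansion of the cocycle along $E^u$ and the rate of contraction along $E^s$ are uniformly exponential; combined with the dominated splitting furnished by \cref{thm:workhorse}, this yields the Anosov property. The key identification is that, in the $(H,V)$-basis of the quotient bundle $E\simeq \R H\oplus \R V$, the invariant line $E^u(x)$ is spanned by $(1,r^u(x))$. Writing $\Psi_t(x)(1,r^u(x))^{\mathrm{T}}=(y(t),\dot y(t))$ and using the invariance of $E^u$ to conclude that this vector is proportional to $(1,r^u(\phi_t(x)))$, one reads off $\dot y(t)=y(t)\,r^u(\phi_t(x))$, so that the expansion factor of $\Psi_t$ on $E^u$ is $\exp(\int_0^t r^u\circ\phi_s\,\d s)$ up to a uniformly bounded change-of-basis factor, and analogously for $E^s$. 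Since $N$ is compact and $r^{s,u}$ continuous, the Anosov property is therefore equivalent to the existence of $\mu,C>0$ with
\[\int_0^t r^u(\phi_s(x))\,\d s\geq \mu t-C\qquad\text{and}\qquad \int_0^t r^s(\phi_s(x))\,\d s\leq -\mu t+C\]
for all $x\in N$ and $t\geq 0$.

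Under hypothesis (1) this is immediate: the continuous functions $r^u>0$ and $r^s<0$ on the compact manifold $N$ are bounded away from $0$, so the required inequalities hold pointwise with room to spare.

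For hypothesis (2), the Hopf solutions $h^{u,s}=r^{u,s}-p$ are bounded away from $0$ on $N$ (since $h^u>0$ and $h^s<0$ are continuous on a compact manifold), so the Riccati equation~\eqref{eq:ric2} may be divided by $h^{u,s}$ to give, along each orbit of $\phi$,
\[r^{u,s}=V\lambda-p-\frac{\kappa_p}{r^{u,s}-p}-\frac{\d}{\d t}\log|h^{u,s}|.\]
The first three terms on the right-hand side are precisely the expressions appearing in hypothesis (2); by continuity and compactness they are $\geq\mu>0$ in the unstable case and $\leq-\mu<0$ in the stable case. Integrating along $\phi_s(x)$ from $0$ to $t$, the total-derivative term contributes a boundary value $\log|h^{u,s}(x)|-\log|h^{u,s}(\phi_t(x))|$ that is uniformly bounded because $|h^{u,s}|$ is bounded above and away from $0$. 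Plugging this into the preceding criterion yields the desired uniform exponential estimates and hence the Anosov property.

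The only mildly subtle point I anticipate is the regularity of the Hopf solutions, which are a priori only H\"older continuous across orbits; however, $r^{u,s}$ is smooth along the flow direction --- as can be seen from the Hopf limiting construction~\eqref{eq:hopflimit}, where each $u_R$ solves the Riccati ODE along the orbit --- so the orbit derivative $Fh^{u,s}$ used in the manipulation above is well defined and the argument goes through without further complication.
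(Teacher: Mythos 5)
Your proof is correct and follows essentially the same route as the paper's: case (1) via uniform positivity/negativity of the continuous functions $r^{u,s}$ on the compact manifold, and case (2) via the weighted quantity $(r^{u,s}-p)y$ --- your division of the Riccati equation by $h^{u,s}$ and subsequent integration is exactly the paper's substitution $z=(r^{u,s}-p)y$ with $\dot z=\bigl(V\lambda-p-\kappa_p/(r^{u,s}-p)\bigr)z$, rewritten in logarithmic/integral form. The remarks you add on the bounded boundary terms and on smoothness of $r^{u,s}$ along orbits are sound and merely make explicit what the paper leaves implicit.
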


\begin{proof} We first consider (1). For a given initial condition $(y(0),\dot{y}(0))\in E^{u}$ we know that
under the coycle $\Phi_t$, we have $\dot{y}=r^{u}y$. If $r_u>0$ we can find a uniform constant
$\mu>0$ such that $|y(-t)|\leqslant  e^{-\mu t}|y(0)|$ for $t>0$. This gives uniform exponential growth for $\Psi_t$
on $E^u$. Arguing with $r^{s}<0$ we get uniform exponential contraction for $\Psi_t$ on $E^{s}$ thus showing that $\Psi_t$ is hyperbolic.

Assume now condition (2) and consider a solution with initial conditions $(y(0),\dot{y}(0))\in E^{u}$. 
Then $\dot{y}=r^{u}y$ and let $z:=(r^{u}-p)y$ (recall that $r^{u}-p>0$). Then a calculation
shows that $\dot{z}=(V\lambda-p)z-\kappa_{p}y=(V\lambda-p-\frac{\kappa_{p}}{r^{u}-p})z$.
This gives exponential growth for $z$ and hence the desired exponential
growth for $\Psi_{t}$ on $E^{u}$. Arguing in a similar way with $E^{s}$, we deduce that $\Psi_{t}$ is hyperbolic.
\end{proof}

\begin{Remark} In \cite{GabrielThomas_Thermo} we used condition (1) to prove that thermostat flows with $\theta=0$ are Anosov when $\ell$ is an integer $\geq 1$.
Remarkably, for the case of fractional differentials in the range $0<\ell<1$, we will crucially need alternative (2).

\label{rem:upgrade}
\end{Remark}

While we shall not use the next proposition, it complements \cref{thm:workhorse} quite nicely
and it gives an indication of the importance of the property $V\notin E^{s,u}$.

\begin{Proposition} Suppose the thermostat determined by $\lambda$ is such that $\Psi_t$ admits a continuous invariant splitting $E=E^u\oplus E^s$ with $V\notin E^{u,s}$. Then the splitting is dominated and there
exists a hyperbolic $\mathrm{SL}(2,\R)$-cocycle $\Psi^{hyp}_{t}$ such that
\[\Psi_{t}=e^{\frac{1}{2}\int_{0}^{t}V\lambda}\;\Psi_{t}^{hyp}.\]
\label{prop:cs}
\end{Proposition}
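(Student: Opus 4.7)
The plan is to exploit the specific form the cocycle takes when the splitting is known, and then adjust by a determinant-killing scalar to extract the $\mathrm{SL}(2,\R)$ piece.

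First I would set up the Riccati picture. Because $V\notin E^{u,s}$ and the splitting is continuous, there are unique continuous functions $r^{u},r^{s}:N\to\R$ such that $E^{u,s}=\R(H+r^{u,s}V)$; $\rho$-invariance of $E^{u,s}$ translates into the Riccati equations $Fr^{u,s}+(r^{u,s})^{2}-r^{u,s}V\lambda+\kappa=0$ along orbits. Since $E^{u}\neq E^{s}$ everywhere, we have $r^{u}(x)\neq r^{s}(x)$ for all $x\in N$; up to relabelling assume $r^{u}>r^{s}$. By compactness and continuity this separation is uniform: there exists $\alpha>0$ with $r^{u}-r^{s}\geqslant\alpha$ everywhere.

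Next, I would make the $\rho$-action explicit in the basis $\{H+r^{u}V,\,H+r^{s}V\}$ at each point. The Riccati equation shows that in these bases $\Psi_{t}$ becomes the diagonal matrix with entries $c^{u,s}(x,t):=\exp\!\int_{0}^{t}r^{u,s}(\phi_{s}x)\,\d s$. Changing back to the $\{H,V\}$-basis multiplies operator norms by a factor bounded above and below (since $r^{u,s}$ and hence $\det P$ are continuous on the compact $N$, and $|r^{u}-r^{s}|\geqslant\alpha$), so
\[
\|\Psi_{t}|_{E^{s}(x)}\|\cdot\|\Psi_{-t}|_{E^{u}(\phi_{t}x)}\|
\asymp \frac{c^{s}(x,t)}{c^{u}(x,t)}=\exp\!\int_{0}^{t}\bigl(r^{s}-r^{u}\bigr)(\phi_{s}x)\,\d s\leqslant \mathrm{e}^{-\alpha t},
\]
which is the definition of a dominated splitting.

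For the second assertion I would apply Liouville's formula to $\frac{\d}{\d t}\Psi_{t}=-\mathbb{B}(\phi_{t}x)\Psi_{t}$. Since $\operatorname{tr}\mathbb{B}=-V\lambda$, we get $\det\Psi_{t}(x)=\exp\!\int_{0}^{t}V\lambda(\phi_{s}x)\,\d s$. Then the definition
\[
\Psi_{t}^{hyp}(x):=\mathrm{e}^{-\frac{1}{2}\int_{0}^{t}V\lambda(\phi_{s}x)\,\d s}\,\Psi_{t}(x)
\]
gives the desired factorisation. A short computation using the cocycle identity for $\Psi$ and additivity of the integral along the orbit shows $\Psi^{hyp}$ is itself a cocycle, and $\det\Psi_{t}^{hyp}=1$ by construction. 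The splitting $E^{s}\oplus E^{u}$ is $\Psi^{hyp}$-invariant (scalar modification), and the diagonal entries become $c_{hyp}^{u,s}=\mathrm{e}^{-\frac{1}{2}\int V\lambda}c^{u,s}$. Combining the already-established ratio bound $c^{s}/c^{u}\leqslant \mathrm{e}^{-\alpha t}$ with the product identity $|c_{hyp}^{u}c_{hyp}^{s}|\asymp 1$ (from $\det\Psi_{t}^{hyp}=1$ and the bounded basis-change determinants) yields $|c_{hyp}^{s}|\lesssim\mathrm{e}^{-\alpha t/2}$ and $|c_{hyp}^{u}|\gtrsim\mathrm{e}^{\alpha t/2}$, proving hyperbolicity of $\Psi^{hyp}$ with rate $\alpha/2$.

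The main point of care, rather than a genuine obstacle, is to pass cleanly between (i) the ``eigenvalues'' $c^{u,s}$ in the $r^{u,s}$-adapted basis and (ii) operator norms in the fixed trivialisation $\{H,V\}$; compactness of $N$ together with $r^{u}-r^{s}\geqslant\alpha>0$ makes the basis-change determinants $r^{s}-r^{u}$ uniformly bounded above and away from zero, so the equivalences used above are harmless.
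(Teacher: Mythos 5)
Your proof is correct, but it reaches the conclusion by a genuinely different route from the paper's. Both arguments start from the same Riccati picture (the functions $r^{u,s}$ with $Fr+r^2-rV\lambda+\kappa=0$ and $r^u\neq r^s$), and both end up with the same normalised cocycle $\Psi^{hyp}_t=\mathrm{e}^{-\frac{1}{2}\int_0^t V\lambda}\,\Psi_t$; the difference is in how hyperbolicity is established. The paper conjugates $\Psi_t$ by the gauge $\mathcal{P}=\left(\begin{smallmatrix}1&0\\ p&1\end{smallmatrix}\right)$ with $p=V\lambda/2$, which splits the infinitesimal generator into the scalar part $-\frac{1}{2}V\lambda\,\mathrm{Id}$ plus the traceless matrix $\left(\begin{smallmatrix}0&-1\\ \kappa_p&0\end{smallmatrix}\right)$, and then proves that the cocycle generated by the traceless part is hyperbolic by exhibiting the strictly monotone quadratic form $Q(a,b)=2ab-([h^u]^2+[h^s]^2)a^2$, whose derivative $\dot{Q}=(b-h^ua)^2+(b-h^sa)^2$ is positive off the origin, and invoking \cite[Proposition 4.1 and Theorem 4.4]{W3}; domination of the original splitting is never proved directly — it falls out of the factorisation because the scalar factor cancels in the domination ratio. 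You reverse the logical order: you prove domination first, by diagonalising $\Psi_t$ in the moving frame $\{H+r^uV,H+r^sV\}$ and using compactness to get the uniform gap $r^u-r^s\geqslant\alpha>0$; you produce $\Psi^{hyp}$ by Liouville's formula (determinant normalisation) rather than by gauge conjugation; and you then upgrade domination to hyperbolicity elementarily, playing the ratio bound $c^s_{hyp}/c^u_{hyp}\leqslant\mathrm{e}^{-\alpha t}$ against the product bound $|c^u_{hyp}c^s_{hyp}|\asymp 1$ that comes from $\det\Psi^{hyp}_t=1$ and the uniformly bounded basis-change determinants. What each approach buys: yours is self-contained (no appeal to Wojtkowski's monotonicity theory) and gives an explicit contraction/expansion rate $\alpha/2$ in terms of the uniform gap of the splitting; the paper's avoids all frame and operator-norm comparisons and places the proposition inside the quadratic-form/$\kappa_p$ machinery used throughout the paper, which is why it is stated there in that language. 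Two cosmetic points in your write-up, neither a gap: the relabelling $r^u>r^s$ implicitly uses connectedness of $N$ (otherwise fix the sign on each component), and the diagonal form of $\Psi_t$ in the adapted frame is the integrated statement of invariance of the two line bundles — the Riccati equations are merely its infinitesimal expression.
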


\begin{proof} We know that the existence of a splitting with $V\notin E^{u,s}$ gives rise to two continuous functions $r^{u,s}:N\to\R$ satisfying the Riccati equation
\[Fr+r^2-rV\lambda+\kappa=0.\]
Moreover, $r^u-r^s\neq 0$.

Recall that the infinitesimal generator for the cocycle $\Psi_t$ is:
$$
\mathbb{B}=\begin{pmatrix} 0 & -1 \\ \kappa & -V\lambda\end{pmatrix}.
$$
Consider a gauge transformation $\mathcal{P}:N\to \mathrm{GL}(2,\R)$ given by
\[\mathcal{P}=\begin{pmatrix} 1 & 0 \\ p & 1\end{pmatrix}\]
with $p=\frac{V\lambda}{2}$. Then the conjugate cocyle $\tilde{\Psi}_{t}$ via $\mathcal{P}$ has infinitesimal generator given by
\[\tilde{\mathbb{B}}=-\frac{1}{2}V\lambda\;\text{\rm Id}+\begin{pmatrix} 0 & -1 \\ \kappa_{p} & 0\end{pmatrix}.\]
To complete the proof we need to prove that the cocycle generated by
\[\begin{pmatrix} 0 & -1 \\ \kappa_{p} & 0\end{pmatrix}\]
is hyperbolic. Note that $h^{u,s}:=r^{u,s}-p$ satisfies the Riccati equation
\[Fh+h^{2}+\kappa_{p}=0.\]
The quadratic form
\[Q(a,b)=2ab-([h^{u}]^{2}+[h^{s}]^{2})a^2\]
has the property that
\[\dot{Q}=(b-h^{u}a)^{2}+(b-h^{s}a)^{2}>0\]
unless $a=b=0$. (Note that $\dot{b}+\kappa_{p}a=0$ and $\dot{a}=b$.) Now the hyperbolicity follows for instance from ~\cite[Proposition 4.1 \& Theorem 4.4]{W3}.
\end{proof}

\begin{Remark} We do not know of any example of a thermostat as in \cref{prop:cs} that is not
Anosov.
\end{Remark}

\subsection{Bi-contact structures} It is possible to recast the discussion of Subsection \ref{subsection:ode} in terms of the notion of bi-contact structure introduced by Eliashberg and Thurston \cite{EM98} and further studied by Mitsumatsu \cite{Mit95}
in the context of projective Anosov flows. 

If $N$ is a closed 3-manifold, we shall say that a {\it bi-contact pair} is a pair of contact forms $(\tau_{+},\tau_{-})$
such that $\tau_{+}\wedge d\tau_{+}$ and $\tau_{-}\wedge d\tau_{-}$ give rise to opposite orientations and
$\text{ker}\,\tau_{+}\cap \text{ker}\,\tau_{-}$ is 1-dimensional at every point.
It turns out (cf. \cite{EM98,Mit95}) that the flow of a non-zero vector field $F$ has a dominated splitting (or is a projective Anosov flow) iff there is a bi-contact pair $(\tau_{+},\tau_{-})$ such that $F\in \text{ker}\,\tau_{+}\cap \text{ker}\,\tau_{-}$.

Suppose now that $N$ is endowed with a generalized Riemannian structure $(X,H,V)$ and $\lambda,p\in C^{\infty}(N)$ are given functions. We consider a new frame $(F,H_{p},V)$, where $F:=X+\lambda V$ and
$H_{p}=H+pV$. If we denote by $(\alpha,\beta,\psi)$ the co-frame dual to $(X,H,V)$, then a simple computation
shows that $(\alpha,\beta,\tilde{\psi})$ is the co-frame dual to $(F,H_{p},V)$, where
\[\tilde{\psi}=-\lambda\alpha-p\beta+\psi.\] 
Then we have:

\begin{Lemma} The pair $(\beta,\tilde{\psi})$ is a bi-contact pair iff $\kappa_{p}<0$.
\end{Lemma}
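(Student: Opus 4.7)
The plan is as follows. The transversality part of the bi-contact condition is automatic: since $(\alpha,\beta,\tilde\psi)$ is dual to $(F,H_p,V)$, we have $\ker\beta=\R F\oplus\R V$ and $\ker\tilde\psi=\R F\oplus\R H_p$, whose intersection is always the line $\R F$. Thus the whole content of the lemma is to pin down when both $\beta$ and $\tilde\psi$ are contact forms that orient $N$ oppositely.

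To handle this, I would first read off the exterior derivatives of the base coframe $(\alpha,\beta,\psi)$ from the commutator relations \eqref{eq:bracket}: a routine application of the Cartan formula yields
\[d\alpha=-\beta\wedge\psi,\qquad d\beta=-\psi\wedge\alpha,\qquad d\psi=-K_g\,\alpha\wedge\beta.\]
Immediately $\beta\wedge d\beta=-\alpha\wedge\beta\wedge\psi$ is a nowhere-vanishing volume form, so $\beta$ itself is already contact, and it orients $N$ by the negative of $\alpha\wedge\beta\wedge\psi$.

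The crux of the argument is the analogous computation for $\tilde\psi=-\lambda\alpha-p\beta+\psi$. Expanding $d\lambda=(X\lambda)\alpha+(H\lambda)\beta+(V\lambda)\psi$ and similarly for $dp$, and substituting the formulas above for $d\alpha,d\beta,d\psi$, I expect to reach
\[d\tilde\psi=(H\lambda-Xp-K_g)\,\alpha\wedge\beta+(p-V\lambda)\,\psi\wedge\alpha+(\lambda+Vp)\,\beta\wedge\psi.\]
Wedging with $\tilde\psi$ kills all terms except multiples of $\alpha\wedge\beta\wedge\psi$, and after collecting signs the resulting coefficient should telescope, using $Fp=Xp+\lambda Vp$ and $\kappa=K_g-H\lambda+\lambda^2$, to exactly $-\bigl(\kappa+Fp+p(p-V\lambda)\bigr)=-\kappa_{p}$. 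Thus one gets $\tilde\psi\wedge d\tilde\psi=-\kappa_{p}\,\alpha\wedge\beta\wedge\psi$.

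With both volume forms in hand the conclusion is immediate. The form $\tilde\psi$ is contact exactly when $\kappa_{p}\neq 0$, and the orientations of $N$ induced by $\beta\wedge d\beta$ and $\tilde\psi\wedge d\tilde\psi$ are opposite precisely when the coefficients $-1$ and $-\kappa_{p}$ have opposite signs, i.e.\ when $\kappa_{p}<0$; this condition subsumes $\kappa_{p}\neq 0$. The only genuine obstacle in executing the plan is the bookkeeping of signs in the expansion of $\tilde\psi\wedge d\tilde\psi$, but there is no conceptual difficulty once the three structure equations for $(\alpha,\beta,\psi)$ have been written down.
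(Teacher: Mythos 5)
Your proposal is correct: the paper itself omits the proof of this lemma (describing it only as ``a fairly straightforward computation''), and your argument is precisely that computation carried out in full. I have checked the structure equations $d\alpha=-\beta\wedge\psi$, $d\beta=-\psi\wedge\alpha$, $d\psi=-K_g\,\alpha\wedge\beta$, your expansion of $d\tilde{\psi}$, and the resulting identities $\beta\wedge d\beta=-\alpha\wedge\beta\wedge\psi$ and $\tilde{\psi}\wedge d\tilde{\psi}=-\kappa_{p}\,\alpha\wedge\beta\wedge\psi$, all of which are exactly right, so the equivalence with $\kappa_{p}<0$ (including the automatic transversality of the kernels along $\R F$) follows as you state.
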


We omit the proof of the lemma (which is a fairly straightforward computation), since we will not use it in subsequent sections. Since $F\in \text{ker}\,\beta\cap \text{ker}\,\tilde{\psi}$ we see that with this lemma
we essentially recover \cref{thm:workhorse}.
The conditions appearing in \cref{lemma:alternative} can now be rephrased in a more pleasing way
in terms of the bi-contact pair $(\beta,\tilde{\psi})$. Indeed condition (1) is equivalent to
\[d\beta(F,H+r^{u}V)>0\;\;\text{and}\;\;d\beta(F,H+r^{s}V)<0\]
while condition (2) is equivalent to
\[d\tilde{\psi}(F,H+r^{u}V)>0\;\;\text{and}\;\;d\tilde{\psi}(F,H+r^{s}V)<0.\]
Again, we omit the verification of these equivalences as they will not be used in the sequel.

\section{Thermostats from Vortices}\label{sec:vortextothermo}

\subsection{The vortex equations}

Let $(M,g)$ be a closed oriented Riemannian $2$-manifold of negative Euler characteristic and $\nu : L \to M$ a complex line bundle of positive degree. For a triple consisting of a Hermitian bundle metric $\hmet$ on $L$, a del-bar operator $\ov{\partial}_L$ on $L$, and a $(1,\! 0)$-form $\varphi$ on $M$ with values in $L$, we consider the following pair of equations
\[
R(\D)+\frac{1}{2}\varphi\wedge\varphi^*+\imag \ell\Omega_g=0 \qquad \text{and} \qquad \ov{\partial}_L\varphi=0. 
\]
Here we write $\ell:=\deg(L)/|\chi(M)|$, $\D$ denotes the Chern connection on $L$ with respect to $(\hmet,\ov{\partial}_L)$, $R(\D)$ its curvature, $\Omega_g$ the area form of $g$ and the $1$-form $\varphi^*$ with values in the dual $L^{-1}$ of $L$ is defined by 
\[
\varphi^*(v)(\xi):=\hmet(\xi,\varphi(v))
\] for all $x \in M$, $v \in TM$ and $\xi \in \nu^{-1}(\{x\})$. We assume $\hmet$ to be conjugate linear in the second variable, so that $\varphi^*$ is an $L^{-1}$-valued $(0,\! 1)$-form. We extend the wedge-product to bundle-valued forms in the standard way, so that for $\varphi \in \Omega^{1}(L)$ and $\varrho \in \Omega^1(L^{-1})$, we have
\[
(\varphi\wedge\varrho)(v,w)=\varrho(w)\varphi(v)-\varrho(v)\varphi(w)
\] 
for all $x \in M$ and $v,w \in T_xM$. In particular, we obtain
\begin{align*}
\left(\varphi\wedge\varphi^*\right)(v,w)&=\hmet(\varphi(v),\varphi(w))-\hmet(\varphi(w),\varphi(v))\\
&=\hmet(\varphi(v),\varphi(w))-\ov{\hmet(\varphi(v),\varphi(w))}=2\imag \Im \hmet(\varphi(v),\varphi(w))
\end{align*}
so that $\varphi\wedge\varphi^*$ is a purely imaginary $(1,\! 1)$-form on $M$. 

The~\textit{complex gauge group $\mathrm{G}_{\C}$} of $L$ is the group of automorphisms of $L$ (covering the identity on $M$) and the~\textit{gauge group $\mathrm{G}$} of $(L,\hmet)$ consists of the automorphisms of $L$ that are unitary with respect to $\hmet$. Since an automorphism of a one-dimensional complex vector space is just a non-vanishing complex number, we have $\mathrm{G}_{\C}\simeq C^{\infty}(M,\C^*)$ and $\mathrm{G}\simeq C^{\infty}(M,\mathrm{U}(1))$, the smooth functions on $M$ with values in the one-dimensional unitary group $\mathrm{U}(1)$. An element $\tau \in \mathrm{G}_{\C}$ acts on a Hermitian bundle metric $\hmet$ on $L$ by the rule
\begin{equation}\label{eq:gaugeactionmet}
\tau\cdot \hmet=|\tau|^2\hmet
\end{equation}
and on $\varphi \in \Omega^{p,q}(L)$ by the rule
\begin{equation}\label{eq:gaugeactionhiggs}
\tau\cdot \varphi=\tau^{-1}\varphi.
\end{equation}
We define an action on the space of del-bar operators on $L$ by 
\begin{equation}\label{eq:gaugeactiondelbar}
\tau\cdot \ov{\partial}_L=\ov{\partial}_L+\tau^{-1}\ov{\partial}\tau.  
\end{equation}
Writing $\D_{\hmet,\ov{\partial}_L}$ for the Chern connection on $L$ determined by the Hermitian metric $\hmet$ and del-bar operator $\ov{\partial}_L$, we obtain:
\begin{Lemma}\label{lem:curvchangechern}
For a Hermitian holomorphic line bundle $(L,\hmet,\ov{\partial}_L)$ and $\tau \in \mathrm{G}_{\C}$ we have the following identities:
\begin{itemize}
	\item[(i)]
$R(\D_{\tau \cdot \hmet,\ov{\partial}_L})=R(\D_{\hmet,\ov{\partial}_L})-2\partial\ov{\partial}\log|\tau|$,
\item[(ii)] $R(\D_{\hmet,\tau\cdot\ov{\partial}_L})=R(\D_{\hmet,\ov{\partial}_L})+2\partial\ov{\partial}\log|\tau|$. 
\end{itemize}
\end{Lemma}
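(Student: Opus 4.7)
The plan is to reduce both identities to a single local computation in a holomorphic frame, where the Chern connection has a standard closed-form expression. First I fix a local holomorphic frame $s$ for $(L,\ov{\partial}_L)$ and set $h:=\hmet(s,s)$, a positive real-valued function. The defining properties of the Chern connection $\D_{\hmet,\ov{\partial}_L}$ --- compatibility with $\hmet$ combined with the requirement that its $(0,\!1)$-part agrees with $\ov{\partial}_L$ --- force the local connection $1$-form to be $\alpha=\partial\log h$ (this is where the convention that $\hmet$ is conjugate linear in the second slot enters). Consequently
\[ R(\D_{\hmet,\ov{\partial}_L})=d\alpha=\ov{\partial}\partial\log h=-\partial\ov{\partial}\log h. \]

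For (i), the operator $\ov{\partial}_L$ is unchanged, so $s$ remains a holomorphic frame for the new Hermitian holomorphic line bundle, and by~\eqref{eq:gaugeactionmet} the local metric function becomes $h^\prime=|\tau|^2 h$. Substituting $\log h^\prime=\log h+2\log|\tau|$ into the displayed formula yields the stated identity with the minus sign immediately.

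For (ii), I need to produce a holomorphic frame for the modified holomorphic structure $\tau\cdot\ov{\partial}_L=\ov{\partial}_L+\tau^{-1}\ov{\partial}\tau$. A direct verification shows that $s^\prime:=\tau^{-1}s$ is such a frame, since
\[ (\tau\cdot\ov{\partial}_L)(\tau^{-1}s)=\ov{\partial}(\tau^{-1})\,s+\tau^{-1}(\ov{\partial}\tau)\,\tau^{-1}s=0. \]
The new local metric function with respect to $s^\prime$ is $\hmet(\tau^{-1}s,\tau^{-1}s)=|\tau|^{-2}h$, and applying the same formula with $\log(|\tau|^{-2}h)=\log h-2\log|\tau|$ produces (ii) with the opposite sign.

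The argument is entirely local and elementary; the only delicate point is to keep track of the sign arising from $\ov{\partial}\partial=-\partial\ov{\partial}$ on functions, together with the conjugate-linearity convention that forces $\alpha=\partial\log h$ rather than $\ov{\partial}\log h$. I do not anticipate a real obstacle: the two identities reduce to the very same one-line local verification, carried out once with the frame unchanged and once with it modified by $\tau^{-1}$.
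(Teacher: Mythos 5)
Your proof is correct. Part (i) is essentially the paper's own argument: both fix a $\ov{\partial}_L$-holomorphic frame $s$, use the standard formula $u^{-1}\partial u$ for the Chern connection form, and substitute the rescaled metric function $|\tau|^2u$. For part (ii), however, you take a genuinely different route. The paper never changes frame: it writes down the candidate connection $\nabla=\D_{\hmet,\ov{\partial}_L}+\tau^{-1}\ov{\partial}\tau-\ov{\tau}^{-1}\partial\ov{\tau}$, observes that its $(0,\!1)$-part equals that of $\tau\cdot\ov{\partial}_L$ (adding a $(1,\!0)$-form does not disturb this) and that it is still $\hmet$-compatible (the correction is purely imaginary), concludes by uniqueness of the Chern connection that $\nabla=\D_{\hmet,\tau\cdot\ov{\partial}_L}$, and then obtains the curvature shift as $\d\left(\tau^{-1}\ov{\partial}\tau-\ov{\tau}^{-1}\partial\ov{\tau}\right)=2\partial\ov{\partial}\log|\tau|$. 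You instead exhibit the holomorphic frame $s'=\tau^{-1}s$ for the new holomorphic structure and rerun the same local computation with metric function $|\tau|^{-2}h$; this correctly uses the gauge action $\tau\cdot\ov{\partial}_L=\ov{\partial}_L+\tau^{-1}\ov{\partial}\tau$ and the conjugate-linearity convention, and it implicitly (and legitimately) relies on the fact that the curvature of a line-bundle connection is a globally defined $2$-form independent of the chosen frame, so the two curvatures may be compared even though they are computed in different frames. What each approach buys: yours unifies (i) and (ii) into a single one-line local verification, which is more economical; the paper's argument for (ii) is frame-free and produces strictly more information, namely an explicit global formula for how the Chern connection itself --- not merely its curvature --- changes under the gauge action on del-bar operators.
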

\begin{proof}
(i) : Let $s : U \to L$ be a local non-vanishing holomorphic section of $L$. We write $u:=\hmet(s,s)$ and let $\theta \in \Omega^1_U$ denote the connection form of the Chern connection $\D_{\hmet,\ov{\partial}_L}$ with respect to $s$. Recall that $\theta=u^{-1}\partial u$. Therefore, the connection form $\theta^{\prime}$ of the Chern connection $\D_{\tau\cdot \hmet,\ov{\partial}_L}$ with respect to $s$ satisfies
\[
\theta^{\prime}=(|\tau|^2u)^{-1}\partial(|\tau|^2u)=\theta+2\partial \log|\tau|
\]
The curvature thus becomes
\[
\d \theta^{\prime}=\d\theta -2\partial\ov{\partial} \log|\tau|
\]
which proves (i). In order to prove (ii) we first remark that the connection
\[
\D=\D_{\hmet,\ov{\partial}_L}+\tau^{-1}\ov{\partial}\tau
\]
satisfies $\D^{\prime\prime}=\D^{\prime\prime}_{\hmet,\tau\cdot\ov{\partial}_L}$ and thus so does 
\[
\nabla=\D_{\hmet,\ov{\partial}_L}+\tau^{-1}\ov{\partial}\tau-\ov{\tau}^{-1}\partial\ov{\tau}
\]
as we have added a $(1,\! 0)$-form. By definition the Chern connection $\D_{\hmet,\ov{\partial}_L}$ is compatible with $\hmet$ and hence so is $\nabla$, as we have added a purely imaginary $1$-form. Therefore $\nabla$ is compatible with $\hmet$ and satisfies $\nabla^{\prime\prime}=\D^{\prime\prime}_{\hmet,\tau\cdot\ov{\partial}_L}$, so it must be the Chern connection $\D_{\hmet,\tau\cdot\ov{\partial}_L}$. For the curvature we obtain
\[
R(\D_{\hmet,\tau\cdot\ov{\partial}_L})=R(\D_{\hmet,\ov{\partial}_L})+\d\left(\tau^{-1}\ov{\partial}\tau-\ov{\tau}^{-1}\partial\ov{\tau}\right)=R(\D_{\hmet,\ov{\partial}_L})+2\partial\ov{\partial}\log |\tau|.\qedhere
\]
\end{proof}
We now have:
\begin{Proposition}
Let $L \to M$ be a complex line bundle on the oriented Riemannian $2$-manifold $(M,g)$ and $\ell:=\deg(L)/|\chi(M)|$. Then the triple $(\hmet,\ov{\partial}_L,\varphi)$ satisfies
\[
R(\D)+\frac{1}{2}\varphi\wedge\varphi^*+\imag \ell\Omega_g=0 \quad \text{and} \quad \ov{\partial}_L\varphi=0 
\]
if and only if $(\tau\cdot \hmet,\tau\cdot\ov{\partial}_L,\tau\cdot\varphi)$ does. 
\end{Proposition}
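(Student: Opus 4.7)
The plan is to verify gauge invariance separately for the three ingredients appearing in the vortex equations---the Chern curvature $R(\D)$, the expression $\varphi\wedge\varphi^*$, and the Cauchy--Riemann operator applied to $\varphi$---and then assemble the results. The area form $\Omega_g$ and the constant $\ell$ are untouched by $\tau$, so nothing else needs to be checked.

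First, I would dispatch the curvature term by invoking \cref{lem:curvchangechern}. Part (i) records that rescaling $\hmet$ by $\tau$ changes $R(\D)$ by $-2\partial\ov{\partial}\log|\tau|$, while part (ii) records that twisting $\ov{\partial}_L$ by $\tau$ changes it by $+2\partial\ov{\partial}\log|\tau|$. Applying the two statements in succession to the pair $(\tau\cdot\hmet,\tau\cdot\ov{\partial}_L)$, the two contributions cancel, so $R(\D_{\tau\cdot\hmet,\tau\cdot\ov{\partial}_L})=R(\D_{\hmet,\ov{\partial}_L})$ pointwise.

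Second, I would compute the effect of the action on $\varphi\wedge\varphi^*$. The key observation is that the definition of $\varphi^*$ uses $\hmet$ in its second, conjugate-linear slot. So under $\hmet\mapsto|\tau|^2\hmet$ and $\varphi\mapsto\tau^{-1}\varphi$, tracking the factors of $\tau$ and $\ov\tau$ yields $(\tau\cdot\varphi)^*=\tau\,\varphi^*$. Combined with $\tau\cdot\varphi=\tau^{-1}\varphi$, the scalar factors cancel in the wedge product and $\varphi\wedge\varphi^*$ is itself invariant. This is the step where I expect the only real subtlety to lie, since one has to keep the conjugate-linearity convention for $\hmet$ consistent with \eqref{eq:gaugeactionmet} and \eqref{eq:gaugeactionhiggs}; mislabeling $\tau$ vs.\ $\ov{\tau}$ would spoil the cancellation.

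Third, for the holomorphicity condition I would apply the transformed del-bar operator $\tau\cdot\ov{\partial}_L=\ov{\partial}_L+\tau^{-1}\ov{\partial}\tau$ to the transformed section $\tau^{-1}\varphi$ and use the Leibniz rule together with $\ov{\partial}(\tau^{-1})=-\tau^{-2}\ov{\partial}\tau$. The connection correction cancels the term coming from differentiating the scalar $\tau^{-1}$, leaving $(\tau\cdot\ov{\partial}_L)(\tau\cdot\varphi)=\tau^{-1}\ov{\partial}_L\varphi$, so one equation vanishes iff the other does.

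Putting the three pieces together, the left-hand side of the curvature equation in \eqref{eq:vortexintro} is pointwise invariant under $\tau$ and the Cauchy--Riemann condition is preserved, giving the claimed equivalence in both directions. Beyond the conjugate-linearity bookkeeping in step two, everything reduces to a direct application of \cref{lem:curvchangechern} and Leibniz's rule.
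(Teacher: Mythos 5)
Your proposal is correct and follows essentially the same route as the paper: invariance of the curvature via the cancellation of parts (i) and (ii) of \cref{lem:curvchangechern}, a direct bookkeeping computation showing $\varphi\wedge\varphi^*$ is gauge invariant (your intermediate identity $(\tau\cdot\varphi)^{*}=\tau\,\varphi^{*}$ is exactly what the paper's displayed computation verifies), and the Leibniz-rule cancellation giving $(\tau\cdot\ov{\partial}_L)(\tau\cdot\varphi)=\tau^{-1}\ov{\partial}_L\varphi$. The only difference is organizational, and you actually spell out the del-bar step that the paper states as immediate.
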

\begin{proof}
We observe that for all $v,w \in TM$
\begin{multline*}
\left((\tau\cdot\varphi)\wedge(\tau\cdot\varphi)^{*_{\tau\cdot \hmet}}\right)(v,w)=|\tau|^2\hmet(\tau^{-1}\varphi(w),\tau^{-1}\varphi(v))\\-|\tau|^2\hmet(\tau^{-1}\varphi(v),\tau^{-1}\varphi(w))=|\tau|^2\tau^{-1}\ov{\tau^{-1}}(\varphi\wedge\varphi^{*_\hmet})(v,w)=(\varphi\wedge\varphi^{*_\hmet})(v,w)
\end{multline*}
so that $\varphi\wedge\varphi^* \in \Omega^{1,1}$ is invariant under complex gauge transformations. Now \cref{lem:curvchangechern} immediately implies that
$R(\D_{\hmet,\ov{\partial}_L})=R(\D_{\tau\cdot \hmet,\tau\cdot\ov{\partial}_L})$
thus showing the invariance of the first equation. Likewise, we immediately obtain
\[
(\tau\cdot \ov{\partial}_L)(\tau\cdot \varphi)=\tau\cdot\ov{\partial}_L\varphi,
\]
so that the equation 
\[
\ov{\partial}_L\varphi=0 
\]
is preserved under the action of the complex gauge group.  
\end{proof}

\subsection{The vortex equations on a root of $SM$}

Since $L$ has positive degree and $\chi(M)<0$, there exist unique positive coprime integers $(m,n)$ so that we have an isomorphism $L^n\simeq K^m$ of complex line bundles. We fix an $n$-th root $SM^{1/n}$ of the unit tangent bundle $SM$ of $(M,g)$ and let $K^{1/n}$ denote the corresponding $n$-th root of $K$, so that we have an isomorphism $\mathcal{Z} : L\to K^{m/n}$ of complex line bundles. Note that such a root exists since $n$ divides $\chi(M)$. We equip $SM^{1/n}$ with the generalised Riemannian structure $(X,H,\mathbb{V})$ as in~\cref{ex:genriemstruc}. We may write $\hmet=\mathrm{e}^{2f}\hmet_0$ for a unique smooth real-valued function $f$ on $M$. Abusing notation, we also use the letter $f$ to denote the pullback of $f$ to $SM^{1/n}$. Recall that the space of del-bar operators on a line bundle $L \to M$ is an affine space modelled on $\Omega^{0,1}$. Therefore, without loosing generality, we can assume that there exists a $1$-form $\theta$ on $M$ so that 
\begin{equation}\label{eq:difcplxstruc}
\ov{\partial}_L=\ov{\partial}_{K^{m/n}}-\ell\,\theta^{0,1},
\end{equation}
where $\theta^{0,1}=\frac{1}{2}(\theta-\imag\star_g\theta) \in \Omega^{0,1}$ denotes the $(0,\! 1)$-part of $\theta$ and $\star_g$ the Hodge-star with respect to $g$. We may also think of $\theta$ as a real-valued function on $SM$ and abusing notation, we also write $\theta$ to denote its pullback to $SM^{1/n}$. Note that the function $\theta$ on $SM^{1/n}$ satisfies $\V\V\theta=-\theta$. The pullback of $\theta^{0,1}$ to $SM^{1/n}$ can be expressed as $\frac{1}{2}(\theta+\imag \V\theta)\ov{\omega}$, where we write $\omega=\omega_1+\imag\omega_2$ and $\ov{\omega}=\omega_1-\imag\omega_2$. Therefore, the connection form $\zeta$ on $SM^{1/n}$ of the Chern connection $\D$ of $(L,\ov{\partial}_L,\hmet)$ can be written as
\[
\zeta=-\imag\ell\psi+w\omega-\frac{\ell}{2}(\theta+\imag \V\theta)\ov{\omega}
\]
for some unique complex-valued function $w$ on $SM^{1/n}$. On $SM^{1/n}$, the condition that $\D$ preserves $\hmet=\mathrm{e}^{2f}\hmet_0$ translates to
\[
\d\left(\mathrm{e}^{2f}\f_1\ov{\f_2}\right)=\mathrm{e}^{2f}\Big((\d \f_1+\zeta\f_1)\ov{\f_2}+\f_1(\d \ov{\f_2}+\ov{\zeta}\ov{\f_2})\Big)
\]
where $\f_1,\f_2$ represent arbitrary smooth sections of $L$. A straighforward calculation yields
\[
\zeta=-\imag\ell\psi+\left(\frac{\ell}{2}(\theta-\imag \V\theta)+Xf-\imag Hf\right)\omega-\frac{\ell}{2}(\theta+\imag \V\theta)\ov{\omega}. 
\]
The $(1,\! 0)$-form $\varphi$ with values in $L$ is a section of $K\otimes L\simeq K^{(n+m)/n}$, so that on $SM^{1/n}$ the form $\varphi$ is represented by a complex-valued $1$-form $\bm{\varphi}$, which we may write as
\[
\bm{\varphi}=\ell\left(\frac{\V a}{1+\ell}+\imag a\right)\omega,
\]
where the real-valued function $a$ satisfies $\V\V a=-(1+\ell)^2 a$, since $\ell=m/n$. 
\begin{Lemma}\label{lem:holcond}
We have $\ov{\partial}_L\varphi=0$ if and only if
\begin{equation}\label{eq:vortexII}
0=X\V a-(1+\ell)Ha-\ell\theta \V a+\ell(1+\ell)a\V\theta.
\end{equation}
\end{Lemma}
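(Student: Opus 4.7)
\medskip

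\noindent\textbf{Proof plan.} My plan is to translate the bundle-valued equation $\ov{\partial}_L\varphi=0$ into an equation on the function $h:=\ell\bigl(\V a/(1+\ell)+\imag a\bigr)$ appearing in $\bm{\varphi}=h\,\omega$, and read off \eqref{eq:vortexII} from its real and imaginary parts. Because $(2,0)$-forms on $M$ vanish, the holomorphicity condition $\ov{\partial}_L\varphi=0$ is equivalent to $\D\varphi=0$, where $\D$ is the Chern connection on $L$. On $SM^{1/n}$ the Chern connection acts on the $L$-representative $\bm{\varphi}$ by
\[
\D\bm{\varphi}=\d\bm{\varphi}+\zeta\wedge\bm{\varphi},
\]
with $\zeta$ the connection $1$-form computed just above the lemma.

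Using the structure equation $\d\omega=\imag\,\omega\wedge\psi$ together with the equivariance identity $\V h=\imag(1+\ell)h$ (which follows from $\V\V a=-(1+\ell)^2 a$), the coefficient of $\psi\wedge\omega$ in $\d\bm{\varphi}+\zeta\wedge\bm{\varphi}$ cancels, as do the $\omega\wedge\omega$ and $Xf,Hf$ terms (they contain $\omega\wedge\omega=0$). What remains, using $\ov{\omega}\wedge\omega=2\imag\,\omega_1\wedge\omega_2$ and $\omega_i\wedge\omega$, is
\[
\D\bm{\varphi}=\Big(-Hh+\ell(\V\theta)h+\imag\bigl(Xh-\ell\theta h\bigr)\Big)\,\omega_1\wedge\omega_2.
\]
Setting the complex coefficient to zero and separating real and imaginary parts of $h=h_1+\imag h_2$ with $h_1=\ell\V a/(1+\ell)$ and $h_2=\ell a$ yields the two real equations
\begin{align*}
X\V a-(1+\ell)Ha-\ell\theta\V a+\ell(1+\ell)a\V\theta&=0,\\
H\V a+(1+\ell)Xa-\ell(\V\theta)\V a-\ell(1+\ell)\theta a&=0,
\end{align*}
the first of which is exactly \eqref{eq:vortexII}.

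The delicate point, and the one I expect to be the main obstacle of the calculation, is showing that the second equation is \emph{redundant}, so that the single equation \eqref{eq:vortexII} characterises $\ov{\partial}_L\varphi=0$. The plan is to apply the vector field $\V$ to \eqref{eq:vortexII} and systematically use the commutators $[\V,X]=H$, $[\V,H]=-X$, together with the identities $\V\V a=-(1+\ell)^2 a$ and $\V\V\theta=-\theta$; a bookkeeping-only computation will show
\[
\V\bigl(\text{LHS of \eqref{eq:vortexII}}\bigr)=-\ell\cdot\bigl(H\V a+(1+\ell)Xa-\ell(\V\theta)\V a-\ell(1+\ell)\theta a\bigr),
\]
so that \eqref{eq:vortexII} holding everywhere on $SM^{1/n}$ forces the second equation too (recalling $\ell>0$). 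The converse direction is immediate, and we conclude that the pair of equations is equivalent to the single equation \eqref{eq:vortexII}.
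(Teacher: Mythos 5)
Your proposal is correct and follows essentially the same route as the paper: both reduce $\ov{\partial}_L\varphi=0$ to the vanishing of $\d\bm{\varphi}+\zeta\wedge\bm{\varphi}$ on $SM^{1/n}$, identify \eqref{eq:vortexII} with the imaginary part of the resulting coefficient, and obtain the remaining real-part equation by applying $\V$ to \eqref{eq:vortexII} and using the commutator relations together with $\V\V a=-(1+\ell)^2a$ and $\V\V\theta=-\theta$. Your explicit identity $\V\bigl(\text{LHS of \eqref{eq:vortexII}}\bigr)=-\ell\bigl(H\V a+(1+\ell)Xa-\ell(\V\theta)\V a-\ell(1+\ell)\theta a\bigr)$ checks out, and it makes precise the redundancy step that the paper only sketches.
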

\begin{proof}
Since $M$ is complex one-dimensional, the condition $\ov{\partial}_L\varphi=0$ is equivalent to $\varphi$ being covariant constant with respect to the Chern connection $\D$ of $(L,\hmet,\ov{\partial}_L)$. On $SM^{1/n}$ this translates to
\[
0=\d\bm{\varphi}+\zeta\wedge\bm{\varphi}. 
\]
Since $\zeta$ defines a connection on $L$, terms involving $\psi$ will cancel each other out and hence we can compute modulo $\psi$. We obtain
\[
\zeta\wedge\bm{\varphi}=\frac{\ell^2}{2}\left(\frac{\V a}{(1+\ell)}+\imag a\right)(\theta+\imag \V\theta)\,\omega\wedge\ov{\omega}\quad \text{mod}\quad \psi
\]
We define
\[
W_{\pm}=\frac{1}{2}\left(X\mp \imag H\right).
\]
Note that $(W_{+},W_{-},\V)$ is the dual basis to $(\omega,\ov{\omega},\psi)$. Hence we obtain
\[
\d \bm{\varphi}=\ell W_{-}\left(\frac{\V a}{1+\ell}+\imag a\right)\ov{\omega}\wedge\omega=-\frac{\ell}{2} \left(X+\imag H\right)\left(\frac{\V a}{1+\ell}+\imag a\right)\omega\wedge\ov{\omega} \quad \text{mod} \quad \psi
\]
The vanishing of the imaginary part of $\d \bm{\varphi}+\zeta\wedge\bm{\varphi}$ is thus equivalent to
\begin{align*}
0&=\frac{\ell}{1+\ell}X\V a-\ell H a-\frac{\ell^2}{1+\ell}\theta \V a+\ell^2 a \V\theta\\
&=\frac{\ell}{1+\ell}\Big(X\V a-(1+\ell)H a-\ell \theta \V a+\ell(1+\ell) a \V\theta\Big),
\end{align*}
as claimed. 

Conversely, if $a,\theta$ satisfy~\eqref{eq:vortexII}, then applying $\V$ and using the commutator relations~\eqref{eq:comrelation} as well as $\V\V a=-(1+\ell)^2 a$ and $\V\V\theta=-\theta$ easily recovers that the real part of $\d\bm{\varphi}+\zeta\wedge\bm{\varphi}$ must vanish as well. 
\end{proof}
Writing
\[
\bm{A}:=\frac{\V a}{1+\ell}+\imag a,
\]
we obtain:
\begin{Lemma}\label{lem:curvcond}
We have $R(\D)+\frac{1}{2}\varphi\wedge\varphi^*+\imag \ell\Omega_g=0$ if and only if
\begin{equation}\label{eq:vortexI}
K_g+X\theta+H\V\theta=-1+\ell \mathrm{e}^{2f}|\bm{A}|^2-\frac{1}{\ell}\left(XXf+HHf\right)
\end{equation}
\end{Lemma}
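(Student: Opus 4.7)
The plan is to pull back the curvature equation to $SM^{1/n}$ and compute $R(\D)=\d\zeta$ directly using the connection form $\zeta$ displayed just before the lemma. Since the vortex equation is an identity of real $(1,1)$-forms on $M$, upon pullback it becomes an equality of scalar multiples of $\omega_1\wedge\omega_2$ on $SM^{1/n}$; all $\psi$-wedge terms in $\d\zeta$ must therefore cancel, and the claim reduces to matching the coefficient of $\omega_1\wedge\omega_2$.

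First, I would compute the right-hand side: using $\bm\varphi=\ell\bm{A}\,\omega$, $\hmet=\mathrm{e}^{2f}\hmet_0$, the identity $(\varphi\wedge\varphi^*)(v,w)=2\imag\Im\hmet(\varphi(v),\varphi(w))$ from the text, and $\omega\wedge\ov\omega=-2\imag\,\omega_1\wedge\omega_2$, one finds $\tfrac{1}{2}\varphi\wedge\varphi^*+\imag\ell\Omega_g=\bigl(-\imag\ell^2\mathrm{e}^{2f}|\bm{A}|^2+\imag\ell\bigr)\omega_1\wedge\omega_2$. I would then split $\zeta$ into three natural pieces and apply $\d$ to each, using the structure equations $\d\omega=\imag\omega\wedge\psi$, $\d\ov\omega=-\imag\ov\omega\wedge\psi$, $\d\psi=-K_g\,\omega_1\wedge\omega_2$, and the expansion $\d F=XF\,\omega_1+HF\,\omega_2+\V F\,\psi$ for any smooth $F$. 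The abelian term $-\imag\ell\psi$ contributes $\imag\ell K_g\,\omega_1\wedge\omega_2$. For the $f$-dependent coefficient $c_f:=Xf-\imag Hf$ of $\omega$, expanding $\d(c_f\,\omega)$ shows its $\omega_1\wedge\omega_2$-part equals $\imag(XXf+HHf)\omega_1\wedge\omega_2$, since the cross term $XHf-HXf=[X,H]f=K_g\V f$ vanishes ($f$ is pulled back from $M$). For the paired coefficients $c_1:=\tfrac{\ell}{2}(\theta-\imag\V\theta)$ of $\omega$ and $c_2:=-\tfrac{\ell}{2}(\theta+\imag\V\theta)$ of $\ov\omega$, the analogous expansion yields $(\imag X(c_1-c_2)-H(c_1+c_2))\omega_1\wedge\omega_2=\imag\ell(X\theta+H\V\theta)\omega_1\wedge\omega_2$, since $c_1-c_2=\ell\theta$ and $c_1+c_2=-\imag\ell\V\theta$.

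It remains to verify that all $\psi$-wedge contributions cancel. The $\psi\wedge\omega$-coefficient of $\d(c_f\,\omega)$ equals $\V c_f-\imag c_f$, and the commutators $\V X=X\V+H$, $\V H=H\V-X$ together with $\V f=0$ give $\V c_f=Hf+\imag Xf=\imag c_f$; analogously, the identity $\V\V\theta=-\theta$ gives $\V c_1=\imag c_1$ and $\V c_2=-\imag c_2$, so the $\psi\wedge\omega$- and $\psi\wedge\ov\omega$-terms from the $\theta$-block also vanish. Adding the three $\omega_1\wedge\omega_2$-coefficients of $\d\zeta$, equating with $\imag\ell^2\mathrm{e}^{2f}|\bm{A}|^2-\imag\ell$, and dividing by $\imag\ell$ produces the displayed PDE. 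The main obstacle is the bookkeeping in the $\theta$-block: tracking real and imaginary parts across the $\omega$ and $\ov\omega$ factors and invoking $\V\V\theta=-\theta$ at the right moments, both to collapse the $\omega_1\wedge\omega_2$-part into the compact combination $X\theta+H\V\theta$ and to secure the $\psi$-wedge cancellations — this is precisely where the specific form of $\zeta$ with its $\theta$-correction has been calibrated.
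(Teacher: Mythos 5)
Your proposal is correct and follows essentially the same route as the paper: both pull everything back to $SM^{1/n}$, compute $\d\zeta$ from the displayed connection form, and match the coefficient of the area term against $\tfrac{1}{2}\bm{\varphi}\wedge\bm{\varphi}^{\bm{*}}=\ell^2\mathrm{e}^{2f}|\bm{A}|^2\,\omega\wedge\ov{\omega}$ and the pullback of $\imag\ell\Omega_g$ (the paper organises the computation via $W_{\pm}=\tfrac{1}{2}(X\mp\imag H)$ and the complex basis $\omega\wedge\ov{\omega}$, while you use $\omega_1\wedge\omega_2$; this is purely notational). Your explicit verification of the $\psi$-term cancellation via $\V c_f=\imag c_f$, $\V c_1=\imag c_1$, $\V c_2=-\imag c_2$ is a welcome elaboration of the point the paper dispatches with the remark that $\zeta$ is a connection form, but it is the same underlying argument.
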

\begin{proof}
Observe that $\varphi^* \in \Omega^{0,1}(L^{-1})$ is represented by 
\[
\bm{\varphi}^{\bm{*}}=\mathrm{e}^{2f}\ov{\bm{\varphi}}=\mathrm{e}^{2f}\ell\ov{\bm{A}}\ov{\omega}
\]
so that $\varphi\wedge\varphi^*$ is represented by
\[
\bm{\varphi}\wedge\bm{\varphi}^{\bm{*}}=\ell^2\mathrm{e}^{2f}|\bm{A}|^2\omega\wedge\ov{\omega}. 
\]
Note that the pullback to $SM^{1/n}$ of the area form $\Omega_g$ of $g$ becomes $\frac{\mathrm{i}}{2}\omega\wedge\ov{\omega}$. Again, since $\zeta$ is the connection form of a connection, the $\psi$-terms will cancel each other out in the curvature expression $\d\zeta$. We obtain
\begin{align*}
\d \zeta&=-\frac{\ell}{2}\left(K_g+W_{-}(\theta-\imag\V\theta)+W_{+}(\theta+\imag \V\theta)+\frac{4}{\ell}W_{-}W_{+}f\right)\omega\wedge\ov{\omega}\\
&=-\frac{\ell}{2}\left(K_g+X\theta+H\V\theta+\frac{1}{\ell}(XXf+HHf)\right)\omega\wedge\ov{\omega}, 
\end{align*}
where we use that $Xf-\imag Hf=2W_{+}f$ and the structure equation
\[
\d\psi=-\frac{\i}{2}K_g\omega\wedge\ov{\omega}. 
\]
In total, we get
\begin{multline*}
\d\zeta+\frac{1}{2}\bm{\varphi}\wedge\bm{\varphi}^*+\imag \ell \frac{\i}{2}\omega\wedge\ov{\omega}=-\frac{\ell}{2}\bigg(K_g+X\theta+H\V\theta+\frac{1}{\ell}(XXf+HHf)\bigg.\\
\bigg.-\ell \mathrm{e}^{2f}|\bm{A}|^2+1\bigg)\omega\wedge\ov{\omega}=0,
\end{multline*}
which proves the claim. 
\end{proof}

\subsection{Fractional differentials}
Note that we may think of $\varphi/\ell$ as a section of $K\otimes L\simeq K^{(n+m)/n}$ which we denote by $A$. Thus, we may interpret $A$ as a differential of fractional degree $(n+m)/n=1+\ell$. Recall that the choice of an $n$-th root $SM^{1/n}$ of $SM$ equips $K^{(n+m)/n}$ with a Hermitian bundle metric which we denote by $\hmet_0$. Defining $|A|^2_g:=\hmet_0(A,A)$, the pullback of the function $|A|^2_g$ to $SM^{1/n}$ is $|\bm{A}|^2$. Moreover, the co-differential $\delta_g\theta$ of $\theta$ with respect to $g$ pulls-back to $SM^{1/n}$ to become $-X\theta-H\V\theta$ and the Laplacian $\Delta_g f$ of $f$ with respect to $g$ pulls-back to $SM^{1/n}$ to become $XXf + HHf$. Using this notation, the equation~\eqref{eq:vortexI} can be written as
\[
K_g-\delta_g\theta=-1+\ell \mathrm{e}^{2f}|A|^2_g-\frac{1}{\ell}\Delta_g f. 
\]
Observe also that since $\ov{\partial}_L\varphi=0$, the equation~\eqref{eq:difcplxstruc} implies 
\[
\ov{\partial}_{K^{1+\ell}}A=\ell\,\theta^{0,1}\otimes A. 
\]
\subsection{The thermostat}

In order to associate a thermostat on $SM^{1/n}$ to a solution of the vortex equation, we first consider as a motivating example the case $L=K^2$. In this case $n=1$ and $m=2$ so that no choice of a root of $SM$ is necessary. We may take $\ov{\partial}_L$ to be the del-bar operator on $K^2$ induced by the metric $g$, that is, we choose $\theta$ to vanish identically. Furthermore we choose $\hmet$ to be $\hmet_0$ so that $f$ vanishes identically as well. Thinking of $\varphi$ as a section of $K\otimes L\simeq K^3$, we obtain a cubic differential $A$, and the vortex equations become
\[
K_g=-1+2|A|^2_g\qquad\text{and}\qquad \ov{\partial}_{K^3} A=0. 
\]
In particular, the cubic differential $A$ is holomorphic with respect to the standard holomorphic line bundle structure on $K^3$. Now observe that $L$ admits a square root $L^{1/2}\simeq K$ and hence we may interpret $\varphi/2$ as a section of $K\otimes \mathrm{Hom}(L^{-1/2},L^{1/2})$. Using the Hermitian metric induced by $\hmet_0$ on $L^{1/2}\simeq K$, we may identify $L^{1/2}\simeq \ov{L^{-1/2}}$. As a real vector bundle $\ov{L^{-1/2}}$ is isomorphic to $L^{-1/2}$. Therefore, we may interpret $\varphi/2$ as a $1$-form on $M$ with values in the endomorphisms of $L^{-1/2}$, thought of as a real vector bundle. Identifying $\C\simeq \R^2$ in the usual way, multiplication with the complex number $z$, thought of as a linear map $\R^2 \to \R^2$, has matrix representation
\[
\begin{pmatrix} \Re z & - \Im z \\ \Im z & \Re z \end{pmatrix}
\]
with respect to the standard basis of $\R^2$. Taking into account the identification $L^{1/2}\simeq \ov{L^{-1/2}}$, which just amounts to complex conjugation, the $1$-form $\varphi/2$ is thus represented by
\[
\frac{1}{2}\begin{pmatrix} 1 & 0 \\ 0 & -1 \end{pmatrix}\begin{pmatrix} \Re\bm{\varphi} & -\Im \bm{\varphi} \\ \Im\bm{\varphi} & \Re\bm{\varphi}\end{pmatrix}=\frac{1}{2}\begin{pmatrix} \Re\bm{\varphi} & -\Im\bm{\varphi}\\  -\Im\bm{\varphi} & -\Re\bm{\varphi}\end{pmatrix}.
\]
The Chern connection on $L$ induces a connection on $L^{-1/2}$ whose connection form is $-(1/2)\zeta$. Adding $\varphi/2$ to this connection, thought of as a connection on the real vector bundle $L^{-1/2}$, we obtain a connection $\nabla$ with connection form
\[
\Upsilon=(\Upsilon^i_j)=-\frac{1}{2}\begin{pmatrix} \Re(\zeta-\bm{\varphi}) & -\Im(\zeta-\bm{\varphi}) \\ \Im(\zeta+\bm{\varphi}) & \Re(\zeta+\bm{\varphi}) \end{pmatrix} 
\]
Since $L^{-1/2}\simeq K^{-1}$, the vector bundle $L^{-1/2}$, as a real vector bundle, is isomorphic to the tangent bundle of $M$. Thus $\Upsilon$ defines a connection $\nabla$ on $TM$ and in~\cite[Lemma 3.1]{MR4109900} it is shown that the orbits of the thermostat $\phi$ on $SM$ defined by the condition $F \inc \Upsilon^2_1=0$ project to $M$ to become the geodesics of $\nabla$, when ignoring the parametrisation. 
\begin{Remark}
The connection $\nabla$ defines a properly convex projective structure on $M$ whose associated Hilbert geodesic flow is a $C^1$ reparametrisation of $\phi$. We refer the reader to~\cite{GabrielThomas_Thermo} and references therein for details. 
\end{Remark}

In general $L$ will not admit a square root, but we may nonetheless formally carry out the same construction, except that now the identification $L^{1/2}\simeq \ov{L^{-1/2}}$ needs to amount for the metric $e^f\hmet_0$ induced by $\hmet$ on the formal root $L^{1/2}$. We may thus define
\begin{align}
\begin{split}\label{eq:formcon}
\Upsilon=(\Upsilon^i_j)&=-\frac{1}{2}\begin{pmatrix} \Re\zeta & -\Im \zeta \\ \Im\zeta & \Re\zeta\end{pmatrix}+\frac{1}{2}\begin{pmatrix} \mathrm{e}^f & 0 \\ 0 & -\mathrm{e}^f\end{pmatrix}\begin{pmatrix} \Re\bm{\varphi} & -\Im \bm{\varphi} \\ \Im\bm{\varphi} & \Re\bm{\varphi}\end{pmatrix}\\
&=-\frac{1}{2}\begin{pmatrix} \Re(\zeta-\mathrm{e}^f\bm{\varphi}) & -\Im(\zeta-\mathrm{e}^f\bm{\varphi}) \\ \Im(\zeta+\mathrm{e}^f\bm{\varphi}) & \Re(\zeta+\mathrm{e}^f\bm{\varphi}) \end{pmatrix}.
\end{split}
\end{align}
Note that the vortex equations can be written as
\begin{equation}\label{eq:vortexformnotation}
\d \zeta=\frac{\ell}{2}\omega\wedge\ov{\omega}-\frac{1}{2}\mathrm{e}^{2f}\bm\varphi\wedge\ov{\bm{\varphi}}\qquad\text{and}\qquad \d\bm{\varphi}=-\zeta\wedge\bm{\varphi}.
\end{equation}
We also obtain
\[
\d\omega=\left(\zeta/\ell+\frac{1}{2}(\theta+\imag\V\theta)\ov{\omega}\right)\wedge\omega.
\]
From~\eqref{eq:vortexformnotation} we easily conclude
\[
\d\Upsilon+\Upsilon\wedge\Upsilon=\frac{\i}{4}\begin{pmatrix} 0 & -\ell \\ \ell & 0 \end{pmatrix}\omega\wedge\ov{\omega}. 
\]
Again in formal analogy to the case $L=K^2$, we obtain a thermostat $\phi$ on $SM^{1/n}$ by requiring that $F \inc \Upsilon^2_1=0$. Using the notation above, we have
\[
\lambda=\mathrm{e}^f a-\V\theta-\frac{1}{\ell}Hf. 
\]

\begin{Remark}[Gauge invariance]
Recall that the vortex equations are invariant under the action of the complex gauge group $\mathrm{G}_{\C}$. It is thus natural to ask how the gauge group affects the associated thermostat. Choosing $\tau=\mathrm{e}^{w}$ for some smooth real-valued function $w$ on $M$, the equations \eqref{eq:gaugeactionmet}, \eqref{eq:gaugeactionhiggs} and \eqref{eq:gaugeactiondelbar} imply that the triple $(A,\theta,f)$ is replaced by
\[
(A,\theta,f) \mapsto (\hat{A},\hat{\theta},\hat{f})=(\mathrm{e}^{-w}A,\theta-\frac{1}{\ell}\d w,f+w).
\]
Let $\hat{\lambda}$ be defined with respect to $(\hat{A},\hat{\theta},\hat{f})$. Then we obtain
\[
\hat{\lambda}=\mathrm{e}^{\hat{f}}\hat{a}-\V\hat{\theta}-\frac{1}{\ell}{H\hat{f}}=\mathrm{e}^{f+w}\mathrm{e}^{-w}a-\V\left(\theta-\frac{1}{\ell}\d w\right)-\frac{1}{\ell}{H}(f+w)=\lambda,
\]
where we use that $\V \d w=Hw$, when we think of $\d w$ as a function on $SM^{1/n}$. It follows that the thermostat associated to a solution of the vortex equations is invariant under the action of the real part of the gauge group $\mathrm{G}_{\C}$. Therefore, without loosing generality, we can assume that $f$ vanishes identically, that is, $\hmet=\hmet_0$. Note however that the unitary part $\mathrm{G}$ does affect the associated thermostat. 
\end{Remark}

\section{Proof of Theorems \ref{mainthmintro:domsplit} and \ref{thm:anosovmain}}
Summarizing~\cref{sec:vortextothermo}, given a solution $(\hmet,\ov{\partial}_L,\varphi)$ to the vortex equations for a complex line bundle $L \to (M,g)$ and upon fixing an $n$-th root $SM^{1/n}$ of $SM$, we obtain a~\textit{vortex thermostat} on $SM^{1/n}$. After possibly applying a (non-unitary) gauge transformation to $(\hmet,\ov{\partial}_L,\varphi)$, we can assume that the thermostat $\phi$ arises from $\lambda=a-\V \theta$, where $a$ encodes a fractional differential on $M$, that is, a section $A$ of $K^{(m+n)/n}$ and $\theta$ a $1$-form on $M$ so that the following equations hold
\begin{equation}\label{eq:vortexvantheta}
K_g-\delta_g\theta=-1+\ell |A|^2_g \qquad \text{and}\qquad \ov{\partial}A=\ell\,\theta^{0,1}\otimes A,
\end{equation}
where for simplicity of notation we write $\ov{\partial}$ for $\ov{\partial}_{K^{(m+n)/n}}$ and where $\ell=m/n$. Thus, by \cref{lem:holcond} and \cref{lem:curvcond} our setup consists of $(X,H,\V)$ on $SM^{1/n}$ as well as real-valued functions $a,\theta$ satisfying $\V\V a=-(1+\ell)^2 a$ and $\V\V\theta=-\theta$ so that
\begin{align}
\label{eq:keyidcurv}K_g&=-1-X \theta-H\V\theta+\ell|\bm{A}|^2,\\
\label{eq:keyidhol}\frac{X\V a}{1+\ell}&=Ha+\frac{\ell\theta \V a}{1+\ell}-\ell a\V\theta,
\end{align}
where $\ell$ is a positive rational number and $\bm{A}=\frac{\V a}{1+\ell}+\imag a$. 

\subsection{Dominated splitting}

Applying~\cref{thm:workhorse} we obtain:

\begin{mainthm}\label{mainthm:domsplit}
Every vortex thermostat admits a dominated splitting.  More\-over, if all closed orbits of $\phi$ are hyperbolic saddles, then $\phi$ is Anosov.
\end{mainthm}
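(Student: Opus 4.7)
The plan is to apply \cref{thm:workhorse}: it suffices to produce a smooth function $p\colon SM^{1/n}\to\R$ with $\kappa_p<0$ pointwise. In the special case $\theta\equiv 0$ treated in \cite{GabrielThomas_Thermo}, the choice $p=\V a/(1+\ell)$ delivers $\kappa_p\equiv -1$ on the nose. This suggests trying the ansatz
\[
p=\frac{\V a}{1+\ell}+c\,\theta
\]
for a real constant $c$ in the general case, and then tuning $c$ so that every $\theta$-dependent cross term in $\kappa_p$ disappears.

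Writing $b=\V a/(1+\ell)$, so that $|\bm A|^2=a^2+b^2$, $\V b=-(1+\ell)a$ and $\V\V\theta=-\theta$, I would first expand $\kappa=K_g-H\lambda+\lambda^2$ using $\lambda=a-\V\theta$ together with the curvature identity \eqref{eq:keyidcurv}; the $H\V\theta$ contribution then cancels cleanly between $K_g$ and $-H\lambda$, leaving
\[
\kappa=-1-X\theta+\ell b^2+(1+\ell)a^2-Ha-2a\V\theta+(\V\theta)^2.
\]
For $Fp=Xp+\lambda\V p$, the only non-trivial derivative is $Xb$, which the holomorphicity identity \eqref{eq:keyidhol} rewrites as $Ha+\ell\theta b-\ell a\V\theta$. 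Since $\V\lambda=(1+\ell)b+\theta$, one gets $p-\V\lambda=-\ell b+(c-1)\theta$, so $p(p-\V\lambda)$ is straightforward.

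Assembling the three contributions, the pairs involving $Ha$, $\ell b^2$, $(1+\ell)a^2$, $X\theta$, $a\V\theta$ and $(\V\theta)^2$ cancel unconditionally, while the remaining $\theta b$-type terms collapse precisely when $c=1$. Thus with $p=b+\theta$ one obtains $\kappa_p\equiv -1<0$ and \cref{thm:workhorse} yields the dominated splitting. For the Anosov conclusion, the dominated splitting combined with the hypothesis that every closed orbit is a hyperbolic saddle places us in the framework of \cite[Theorem~B]{AR-H} recalled in \cref{subsection:ode}, giving a decomposition $N=\Lambda\cup\mathcal T$ with $\Lambda$ uniformly hyperbolic and $\mathcal T$ a finite union of normally hyperbolic irrational tori; it then remains to exclude $\mathcal T\neq\emptyset$ to conclude that $\phi$ is Anosov on all of $N$.

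The principal task is the algebraic bookkeeping that forces $\kappa_p\equiv -1$ at $c=1$: the many cross terms of types $\theta b$, $a\V\theta$ and $(\V\theta)^2$ generated by $Fp$ and $p(p-\V\lambda)$ must be matched against the corresponding terms in $\kappa$, and it is precisely this triple cancellation that pins down $c=1$. Beyond the calculation, the only non-routine step is justifying that the normally hyperbolic tori in the \cite{AR-H} decomposition cannot occur for vortex thermostats, and this is the one point where input specific to the vortex structure may be needed to close the argument.
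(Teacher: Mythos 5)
Your proof of the dominated splitting half is correct and is essentially identical to the paper's own argument: your ansatz at $c=1$, i.e.\ $p=\theta+\V a/(1+\ell)$, is exactly the gauge function used in the paper, and the same cancellations (driven by \eqref{eq:keyidcurv}, \eqref{eq:keyidhol}, $\V\V a=-(1+\ell)^2a$ and $\V\V\theta=-\theta$) yield $\kappa_p\equiv-1$, after which \cref{thm:workhorse} applies. One slip in your bookkeeping: the $X\theta$, $a\V\theta$ and $(\V\theta)^2$ terms do \emph{not} cancel unconditionally; their coefficients in $\kappa_p$ are proportional to $c-1$, just like the $\theta b$ and $\theta^2$ terms, and all of them vanish only at $c=1$. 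This does not affect your conclusion, since at $c=1$ everything collapses as you claim.

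The genuine gap is in the Anosov addendum. You correctly reduce it, via \cite[Theorem B]{AR-H}, to showing $\mathcal T=\emptyset$, but then you explicitly leave that exclusion open (``this is the one point where input specific to the vortex structure may be needed''). This is precisely the step that requires an argument, and it can be closed as follows (this is what the paper does). By domination, any invariant torus $T\in\mathcal T$ must be tangent to one of the weak invariant plane bundles $\tilde E^{s}$ or $\tilde E^{u}$, as noted in \cref{sec:dom+hyp}; these are spanned by $F=X+\lambda\V$ and $H+r^{s,u}\V$, because \cref{thm:workhorse} gives $\V\notin E^{s,u}$. Comparing $X$- and $H$-components shows $\V$ is nowhere contained in $\tilde E^{s,u}$, hence nowhere tangent to $T$. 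Since the kernel of $d\pi_n$ is spanned by the vertical direction, the restriction $\pi_n|_T\colon T\to M$ is a local diffeomorphism between closed surfaces, hence a covering map; but a torus, with $\chi(T)=0$, cannot cover a surface with $\chi(M)<0$, since Euler characteristics are multiplicative under finite covers. This contradiction shows $\mathcal T=\emptyset$, so $N=\Lambda$ is a hyperbolic set and $\phi$ is Anosov. Without this (or some equivalent) argument, your proposal proves only the dominated splitting statement, not the full theorem.
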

\begin{proof}
Using~\cref{thm:workhorse} we need to show that there exists a smooth function $p : SM^{1/n} \to \R$ so that
\[
\kappa_{p}=\kappa+Fp+p(p-\V\lambda)<0.
\]
Recall that $\lambda=a-\V\theta$. Taking $p=\theta+\V a/(1+\ell)$ we compute
\begin{align*}
\kappa_p-\kappa&=F\left(\theta+\frac{\V a}{1+\ell}\right)-\left(\theta+\frac{\V a}{1+\ell}\right)\left(\theta+\frac{\V a}{1+\ell}-\V a+\V\V\theta\right)\\
&=X\theta+Ha+\frac{\ell\theta \V a}{1+\ell}-\ell a\V\theta+\lambda \V p-\ell\left(\theta+\frac{\V a}{1+\ell}\right)\frac{\V a}{1+\ell}\\
&=X\theta+Ha-(1+\ell)a^2-(\V\theta)^2+2a\V\theta-\ell\left(\frac{\V a}{1+\ell}\right)^2\\
&=X\theta+Ha -\ell|\bm A|^2-a^2-(\V\theta)^2+2a\V\theta\\
&=-1-K_g-H\V\theta+H a-a^2-(\V\theta)^2+2a\V\theta\\
&=-1-(K_g-H\lambda+\lambda^2)=-1-\kappa
\end{align*}
where we have used that $\V\V\theta=-\theta$ and $\V\V a=-(1+\ell)^2 a$ as well as~\eqref{eq:defkappa}, \eqref{eq:keyidcurv} and \eqref{eq:keyidhol}. We conclude that $\kappa_p=-1$ and the existence of a dominated splitting follows. 

Finally, the addendum regarding the Anosov property when the closed orbits of $\phi$ are hyperbolic saddles is a consequence of \cite[Theorem B]{AR-H}. Indeed, in our situation the invariant normally hyperbolic irrational tori cannot arise since $V$ must be transversal to them. If we had one such torus $T$, then the projection map $\pi_{n}:SM^{1/n}\to M$ restricted to $T$ would be a local diffeomorphism which is absurd since $\chi(M)<0$.
\end{proof}

\subsection{The Anosov property} 

While we have an isomorphism $\mathcal{Z} : L \to K^{m/n}$ of complex line bundles, the two line bundles need not be isomorphic as holomorphic line bundles. We do however obtain:

\begin{mainthm}\label{thm:anosovmain}
Suppose $\mathcal{Z} : L \to K^{m/n}$ is an isomorphism of holomorphic line bundles, then the associated vortex thermostat is Anosov. 
\end{mainthm}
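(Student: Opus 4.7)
The hypothesis that $\mathcal Z$ is an isomorphism of holomorphic line bundles is, by the discussion preceding~\cref{mainthmintro:anosov}, equivalent to $\theta \equiv 0$. Appealing to the gauge-invariance remark I will further normalise $\hmet = \hmet_0$, so that $f \equiv 0$. With these reductions the vortex equations~\eqref{eq:vortexvantheta} become $K_g = -1 + \ell |A|^2_g$ with $A$ genuinely holomorphic, and the associated thermostat is driven by $\lambda = a$. The proof of~\cref{mainthm:domsplit} shows that the gauge choice $p = \V a/(1+\ell)$ yields $\kappa_p = -1$, so~\cref{thm:workhorse} already gives a dominated splitting. The Hopf solutions $h^{u,s} := r^{u,s} - p$ satisfy $h^u > 0 > h^s$ and the Riccati equation~\eqref{eq:ric2}, which in the present setting reduces to
\[
Fh + h^2 + \frac{1-\ell}{1+\ell}(\V a)\, h - 1 = 0.
\]

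To upgrade domination to the Anosov property I will invoke~\cref{lemma:alternative}, bifurcating the strategy according to the value of $\ell$. When $\ell \geqslant 1$ the coefficient $(1-\ell)/(1+\ell)$ is non-positive, and I expect to verify alternative~(1), that is $r^u > 0$ and $r^s < 0$, which is equivalent to $-p$ lying strictly between $h^s$ and $h^u$. The argument parallels~\cite{GabrielThomas_Thermo}: a comparison principle for the Riccati equation along orbits of $\phi$, combined with the holomorphicity identity $X\V a = (1+\ell)Ha$ coming from~\eqref{eq:keyidhol} with $\theta = 0$, supplies the required two-sided bound on $-p$.

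The genuine difficulty, flagged in~\cref{rem:upgrade}, lies in the range $0 < \ell < 1$, where the first-order term in the Riccati equation reverses sign and alternative~(1) can fail. There I will verify alternative~(2). After substituting $\kappa_p = -1$ and $\V\lambda - p = \ell\V a/(1+\ell)$, the two inequalities to be checked become
\[
\frac{\ell}{1+\ell}\V a + \frac{1}{h^u} > 0 \qquad\text{and}\qquad \frac{\ell}{1+\ell}\V a + \frac{1}{h^s} < 0.
\]
Neither estimate is available directly from the Riccati equation. The plan is to realise $h^u(x)$ via the Hopf-type limit~\eqref{eq:hopflimit} (together with its backward analogue for $h^s$) and then analyse the auxiliary quantity $q^{u,s} := \ell \V a/(1+\ell) + 1/h^{u,s}$ along orbits: differentiating $q^{u,s}$ along $F$ and using~\eqref{eq:keyidcurv}, \eqref{eq:keyidhol} together with $\V\V a = -(1+\ell)^2 a$ should produce a transport equation whose source term has a definite sign, which then propagates to the Hopf limit. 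The main obstacle is obtaining \emph{strict} rather than weak inequalities; ruling out the equality case along an entire orbit will require a rigidity argument exploiting the holomorphicity of $A$ together with the negativity of $\chi(M)$, and this is where I expect the substantive work to lie.
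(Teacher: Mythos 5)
Your skeleton coincides with the paper's: the reduction to $\theta\equiv0$, $f\equiv0$, the gauge $p=\V a/(1+\ell)$ giving $\kappa_p=-1$, the Riccati equation for $h=r-p$ with coefficient $B=\frac{1-\ell}{1+\ell}\V a$, and the case split --- alternative (1) of \cref{lemma:alternative} for $\ell\geqslant1$, alternative (2) for $0<\ell<1$ --- are exactly the paper's. But both verifications are left as plans, and each plan omits the ingredient that makes it work. First, you never establish a pointwise bound on $A$. The paper proves (\cref{lemma:boundA}, a maximum-principle argument as in \cite[Lemma 5.2]{GabrielThomas_Thermo}) that $K_g<0$, i.e.\ $\ell|A|^2_g<1$, whence $|\V a|/(1+\ell)<1/\sqrt{\ell}$, so that $|p|<1/\sqrt{\ell}$ and $\left|\frac{\ell}{1+\ell}\V a\right|<\sqrt{\ell}$. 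The holomorphicity identity $X\V a=(1+\ell)Ha$ is an identity, not an estimate, and no comparison principle for the Riccati equation can substitute for this pointwise bound; without it neither of your two cases closes, so your claim that holomorphicity plus comparison ``supplies the required two-sided bound on $-p$'' is unsubstantiated.

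Second, your assertion that the alternative-(2) inequalities are ``not available directly from the Riccati equation'' is exactly where you diverge from, and fall short of, the paper. The paper's \cref{lemma:fullbound} compares the Hopf solution $h$, built by the limiting procedure \eqref{eq:hopflimit}, with solutions of the constant-coefficient equations $\dot{\gamma}+\gamma^2\mp c\gamma-1=0$, $c=\max|B|$, to obtain
\[
\frac{-c+\sqrt{c^2+4}}{2}\;\leqslant\; h\;\leqslant\;\frac{c+\sqrt{c^2+4}}{2}.
\]
Since $c\leqslant|1-\ell|/\sqrt{\ell}$ by the bound above, the upper bound is at most $1/\sqrt{\ell}$ when $0<\ell\leqslant1$ and the lower bound is at least $1/\sqrt{\ell}$ when $\ell\geqslant1$; combined with the \emph{strict} inequalities $\bigl|\frac{\ell}{1+\ell}\V a\bigr|<\sqrt{\ell}$, respectively $|p|<1/\sqrt{\ell}$, and compactness of $SM^{1/n}$, this verifies alternative (2), respectively alternative (1), with uniform constants. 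In particular no transport equation for $q^{u,s}$ and no rigidity argument for an equality case are needed: strictness comes for free from compactness and $K_g<0$. As written, your route for $0<\ell<1$ rests on an unverified sign-definiteness of a source term (and would in any case still require the missing bound on $\V a$), so it remains a plausible plan rather than a proof.
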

Recall from~\eqref{eq:difcplxstruc} that we write $\ov{\partial}_L=\ov{\partial}_{K^{m/n}}-\ell\,\theta^{0,1}$ for some $1$-form $\theta$ on $M$. The isomorphism $\mathcal{Z}$ being an isomorphism of holomorphic line bundles translates to $\theta$ vanishing identically. We thus henceforth restrict to the case where $\theta\equiv 0$, so that the equations~\eqref{eq:vortexvantheta} become
\begin{equation}\label{eq:coupvortexhol}
K_g=-1+\ell|A|^2_g,\quad \text{and}\quad \ov{\partial}A=0. 
\end{equation}

We start with the following comparison lemma:

\begin{Lemma} Let $\h$ be the positive Hopf solution of $Fh+\h^2+B\h-1=0$. Then
\[\frac{-c+\sqrt{c^2+4}}{2}\leqslant \h \leqslant \frac{c+\sqrt{c^2+4}}{2}\]
where $c=\max |B|$ and $B=\left(\frac{1-\ell}{1+\ell}\right)\V a$.
\label{lemma:fullbound}
\end{Lemma}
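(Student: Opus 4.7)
The plan is to argue by a maximum-principle argument on the compact manifold $SM^{1/n}$, exploiting the fact that along each orbit the positive Hopf function satisfies a Riccati ODE with bounded coefficient. The key algebraic observation is that for each fixed value of $B\in\R$, the ``frozen'' quadratic $H^2+BH-1=0$ has a unique positive root
\[
h_+(B):=\frac{-B+\sqrt{B^2+4}}{2},
\]
and a direct computation gives $h_+'(B)=(B-\sqrt{B^2+4})/(2\sqrt{B^2+4})<0$. Hence $h_+$ is strictly decreasing in $B$, and its extrema on $[-c,c]$ are exactly the two bounds appearing in the statement, namely $h_+(c)=(-c+\sqrt{c^2+4})/2$ and $h_+(-c)=(c+\sqrt{c^2+4})/2$.

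Next I would invoke \cref{mainthm:domsplit}: the vortex thermostat $\phi$ admits a dominated splitting, so the positive Hopf function $h=h^u=r^u-p$ is a continuous, strictly positive function on $SM^{1/n}$ that satisfies the Riccati equation $Fh+h^2+Bh-1=0$ along every orbit. In particular $h$ is $C^1$ in the flow direction, being the classical solution of a smooth ODE in $t$, so by compactness it attains its maximum $M>0$ at some point $x_0$ and its minimum $m>0$ at some point $x_1$.

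The crux of the argument is then a standard extremum step. At $x_0$ the function $t\mapsto h(\phi_t(x_0))$ attains a global maximum at $t=0$, whence $Fh(x_0)=0$. Substituting into the Riccati equation forces $M^2+B(x_0)M-1=0$, and positivity of $M$ pins it down to $M=h_+(B(x_0))$. Combined with $|B(x_0)|\leqslant c$ and the monotonicity of $h_+$ this yields $M\leqslant h_+(-c)=(c+\sqrt{c^2+4})/2$. A symmetric argument at $x_1$ gives $m\geqslant h_+(c)=(-c+\sqrt{c^2+4})/2$, establishing both bounds simultaneously.

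I expect the only subtle point to be the regularity needed to justify $Fh=0$ at the extremizing points. The positive Hopf function $h$ is only known a priori to be H\"older in directions transverse to the flow, but along each orbit it is a classical solution of a smooth ODE, hence $C^1$ in the flow direction; this is exactly what is needed, so no genuine obstacle arises.
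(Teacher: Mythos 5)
Your proof is correct, but it follows a genuinely different route from the paper's. The paper proves \cref{lemma:fullbound} orbit by orbit, going back to the limiting construction \eqref{eq:hopflimit} of the Hopf solution: it writes $h(x,v)=\lim_{R\to\infty}\eta_R(0)$ for solutions $\eta_R$ of the Riccati equation along the orbit with prescribed data at $t=-R$, and then applies ODE comparison against the explicit solutions of the constant-coefficient equations $\dot{\gamma}+\gamma^2\mp c\gamma-1=0$, whose limits as $R\to\infty$ are exactly the two claimed constants. Your argument instead works globally: you use compactness of $SM^{1/n}$, the continuity (H\"older regularity) of $r^u$ and hence of $h=r^u-p$, and its strict positivity --- all established in \cref{sec:dom+hyp} once $\kappa_p<0$ --- to locate extrema of $h$, and at an extremum $x_0$ the differentiability of $t\mapsto h(\phi_t(x_0))$ (which holds because the invariance of $E^u$ makes this a classical solution of the Riccati ODE, as you correctly note) forces $Fh(x_0)=0$, reducing the bound to the frozen quadratic $H^2+BH-1=0$ and the monotonicity of its positive root $h_+(B)$. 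What each approach buys: yours is shorter, avoids explicit solution formulas, and isolates the algebraic mechanism (monotonicity of $h_+$); the paper's comparison argument is pointwise along a single orbit, so it never invokes compactness of the manifold or continuity of $h$ beyond what the construction \eqref{eq:hopflimit} itself provides, and it exhibits the bounds directly as limits of comparison solutions rather than as values at extremal points. Both proofs use only facts already available from \cref{thm:workhorse} and its surrounding discussion, so your proposal is a valid alternative.
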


\begin{proof}[Proof of \cref{lemma:fullbound}]
We fix $(x,v) \in SM^{1/n}$. Recall from \cref{sec:dom+hyp} that the existence of a dominated splitting implies that the positive Hopf solution $h$ may be constructed using the limiting procedure
\[
h(x,v)=\lim_{R \to \infty}\eta_R(0),
\]
where for $R>0$ the function $\eta_R$ denotes the solution to the ODE
\[
\dot{\eta}(t)+\eta^2(t)+B(\phi_t(x,v))\eta(t)-1=0
\]
with $\eta_R(-R)=0$. Since $B\geqslant -c$ and $\h$ is positive, we have
\[\dot{\eta}=-\eta^2-B\eta+1\leqslant -\eta^2+c\eta+1.\] 
Hence if $\gamma$ solves the constant coefficients Riccati equation
\[\dot{\gamma}+\gamma^2-c\gamma-1=0\]
then $\eta(t)\leqslant \gamma(t)$ for $t\geqslant t_0$ provided $\eta(t_0)=\gamma(t_0)$ by ODE comparison. The solution $\gamma_R$ to $\dot{\gamma}+\gamma^2-c\gamma-1=0$ with $\gamma_{R}(-R)=0$ is given by
\[\gamma_{R}(t)=\frac{1-e^{(-R-t)/E}}{-C_{-}+C_{+}e^{(-R-t)/E}}\]
where 
\[C_{\pm}=\frac{c\pm\sqrt{c^2+4}}{2}\]
and $E=1/(C_{+}-C_{-})$. Thus
\[\eta_{R}(0)\leqslant \gamma_{R}(0)\to -1/C_{-}=C_{+}\]
as $R\to\infty$ and thus $h(x,v)\leqslant \frac{c+\sqrt{c^2+4}}{2}$.

The lower bound can also be proved in the same way. Since $B\leqslant c$, we have
\[\dot{\eta}=-\eta^2-B\eta+1\geqslant -\eta^2-c\eta+1.\] 
And now we compare with solutions of 
\[\dot{\gamma}+\gamma^2+c\gamma-1=0,\]
in particular those $\gamma_{R}$ with $\gamma_{R}(-R)=\infty$. One gets
\[\eta_{R}(0)\geqslant \gamma_{R}(0)\to \frac{-c+\sqrt{c^2+4}}{2}\]
as $R\to\infty$ and thus $h(x,v)\geqslant \frac{-c+\sqrt{c^2+4}}{2}$.
\end{proof}

For what follows we need a bound on $|A|_g^2$.

\begin{Lemma} Suppose $(g,A)$ satisfies $K_{g}=-1+\ell|A|^{2}_{g}$ and $\ov{\partial}A=0$.
Then $K_{g}<0$.
\label{lemma:boundA}
\end{Lemma}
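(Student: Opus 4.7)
My plan is to exploit the holomorphicity of $A$ via a maximum principle argument. The idea is to convert the equation $\ov{\partial}A=0$, together with the fact that $A$ is a section of the positive-degree line bundle $K^{1+\ell}$, into a Bochner-type differential inequality for $v:=|A|^2_g$ in terms of $K_g$. The coupling equation $K_g=-1+\ell|A|^2_g$ then pins down $v$ from above by $1/\ell$, which yields $K_g\leqslant 0$; the strict inequality is then extracted via a strong maximum principle argument, made safe against trivial conclusions by Gauss--Bonnet.

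To derive the differential inequality I would lift to $SM^{1/n}$ and work with the complex vector fields $W_{\pm}=\tfrac{1}{2}(X\mp\imag H)$ dual to $\omega,\ov{\omega}$. In the notation of~\cref{sec:vortextothermo}, the section $A\in\Gamma(K^{1+\ell})$ is represented on $SM^{1/n}$ by $\bm{A}=\V a/(1+\ell)+\imag a$ with $\V\bm{A}=\imag(1+\ell)\bm{A}$; setting $\theta=0$ in~\eqref{eq:vortexII} and using the weight equivariance shows that $\ov{\partial}A=0$ is equivalent to $W_-\bm{A}=0$. Since $v$ descends to $M$ it is $\V$-invariant, and hence $\Delta_g v=4W_+W_- v$. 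Combining $W_-\bm{A}=0$ with the commutator $[W_-,W_+]=-\tfrac{\imag}{2}K_g\V$, a short computation then yields
\[
\Delta_g v = 4|W_+\bm{A}|^2 + 2(1+\ell)K_g\,v \;\geqslant\; 2(1+\ell)K_g\,v,
\]
and substituting $K_g=-1+\ell v$ produces the key inequality
\[
\Delta_g v \;\geqslant\; -2(1+\ell)\,v\,(1-\ell v).
\]

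If $A\equiv 0$ then $K_g\equiv -1$ and we are done. Otherwise $v$ attains a positive maximum at some $x_0\in M$, at which $\Delta_g v\leqslant 0$; the displayed inequality then forces $v(x_0)\leqslant 1/\ell$, so $v\leqslant 1/\ell$ and $K_g\leqslant 0$ throughout $M$. To upgrade to strict inequality, set $w:=1-\ell v\geqslant 0$; the above rewrites as
\[
\Delta_g w - 2\ell(1+\ell)\,v\,w \;\leqslant\; 0.
\]
If $w(x_1)=0$ at some $x_1\in M$, then $-w$ attains its non-negative maximum $0$ at the interior point $x_1$ and satisfies $\Delta_g(-w)-2\ell(1+\ell)\,v\,(-w)\geqslant 0$, an elliptic inequality with non-positive zeroth-order coefficient. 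Hopf's strong maximum principle then forces $w\equiv 0$, i.e., $K_g\equiv 0$, which contradicts Gauss--Bonnet since $\int_M K_g\,\Omega_g=2\pi\chi(M)<0$.

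The main obstacle I anticipate is pinning down the signs in the Bochner-type identity: the curvature of $K^{1+\ell}$ with its natural Hermitian metric $\hmet_0$ is $\imag(1+\ell)K_g\Omega_g$, which is precisely what produces the crucial $+2(1+\ell)K_g v$ term. With the opposite sign convention the inequality would point the wrong way and the entire maximum principle argument would collapse. The weak bound and its upgrade via the strong maximum principle are, by contrast, essentially routine once the differential inequality is in hand.
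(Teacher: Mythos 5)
Your proof is correct, and it succeeds by essentially the same mechanism as the argument the paper relies on, though your implementation is genuinely different and, unlike the paper's, self-contained. The paper gives no proof of its own here: it cites \cite[Lemma 5.2]{GabrielThomas_Thermo}, where the result is proved for integral degree $d\geqslant 2$ by a maximum-principle argument based on the curvature identity $\Delta_g\log|A|^2_g=2(1+\ell)K_g$, valid away from the zeros of $A$, combined with Gauss--Bonnet; the paper then asserts that this argument survives fractional degree. You instead establish the smooth global Bochner identity $\Delta_g v=4|W_+\bm{A}|^2+2(1+\ell)K_g v$ for $v=|A|^2_g$, derived on $SM^{1/n}$ from $W_-\bm{A}=0$, $\V\bm{A}=\imag(1+\ell)\bm{A}$ and $[W_+,W_-]=\tfrac{\imag}{2}K_g\V$; I have checked this computation, including the sign of the curvature term, and it is correct, as is the identification of $\ov{\partial}A=0$ with $W_-\bm{A}=0$ via \eqref{eq:vortexII} with $\theta\equiv 0$. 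Your route buys two things over the cited one: the differential inequality holds across the zeros of $A$, so the zero divisor never needs to be discussed, and the derivation takes place directly in the root-bundle formalism, so the fractional-degree case is handled on exactly the same footing as the integral one --- which is precisely the step the paper leaves as ``easy to check''. The endgame is also sound: the interior-maximum argument gives $v\leqslant 1/\ell$, hence $K_g\leqslant 0$; then $w=1-\ell v\geqslant 0$ satisfies $\Delta_g w-2\ell(1+\ell)v\,w\leqslant 0$, whose zeroth-order coefficient is non-positive, so Hopf's strong maximum principle applies and a zero of $w$ would force $K_g\equiv 0$, contradicting $\int_M K_g\,\Omega_g=2\pi\chi(M)<0$.
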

In the case where $A$ is a differential of integral degree $d\geqslant 2$, the lemma was proved in~\cite{GabrielThomas_Thermo}. It is easy to check that the proof also holds in the case of a differential of fractional degree $d>1$. We refer the reader to~\cite[Lemma 5.2]{GabrielThomas_Thermo} for details. 

We are now ready to prove \cref{thm:anosovmain}.

\begin{proof}[Proof of \cref{thm:anosovmain}] We already know that the flow admits
a dominated splitting. To prove the Anosov property we shall use \cref{lemma:alternative}.
We will prove that in the range $\ell\geqslant 1$ our flows fit alternative (1) and for $0< \ell \leqslant 1$, they fit alternative (2). We shall prove the claims for the unstable bundle. The proofs for the stable bundle are quite analogous.

We note that \cref{lemma:boundA} gives
\begin{equation}
-1< \frac{\sqrt{\ell}}{1+\ell}\V a < 1.
\label{eq:boundA}
\end{equation}
Also note that for our thermostat $p=\V a/(1+\ell)$, $\kappa_{p}=-1$ and $h=r^u-p$.

Assume first that $\ell\geqslant 1$. We shall prove that $r^u>0$. This is equivalent to
\begin{equation}
h+\frac{\V a}{1+\ell}>0.
\label{eq:alt1}
\end{equation}
In view of \eqref{eq:boundA} and \eqref{eq:alt1} it is enough to prove that 
\[h\geqslant 1/\sqrt{\ell}.\]
From the definition of $c$ in \cref{lemma:fullbound} and the bound $\frac{\sqrt{\ell}}{1+\ell}\V a < 1$ we derive
$c\leqslant (1-\ell)/\sqrt{\ell}$. Hence
\[\frac{-c+\sqrt{c^2+4}}{2}\geqslant 1/\sqrt{\ell}\]
and the desired bound follows from \cref{lemma:fullbound}.

Assume now that $0<\ell\leqslant 1$. Condition (2) in \cref{lemma:alternative} for $r^u$ becomes
\begin{equation}
\left(\frac{\ell}{1+\ell}\right)\V a+1/h>0.
\label{eq:3.3}
\end{equation}
In view of \eqref{eq:boundA} and \eqref{eq:3.3} it is enough to prove that 
\[h\leqslant 1/\sqrt{\ell}.\]
From the definition of $c$ in \cref{lemma:fullbound} and the bound $\frac{\sqrt{\ell}}{1+\ell}\V a < 1$ we derive
$c\leqslant (1-\ell)/\sqrt{\ell}$. Hence
\[\frac{c+\sqrt{c^2+4}}{2}\leqslant 1/\sqrt{\ell}\]
and the desired bound follows from \cref{lemma:fullbound}.
\end{proof}

\begin{Remark}
As we have mentioned in the introduction, in the special case where $A$ is a cubic holomorphic differential, a solution $(g,A)$ to~\eqref{eq:coupvortexhol} gives rise to a properly convex projective structure on $M$. The monodromy representation of such a properly convex projective structure is an example of an~\textit{Anosov representation} as introduced by Labourie~\cite{MR2221137}. In recent work~\cite{MR4012341} Bochi, Potrie \& Sambarino show how Anosov representations can be used to construct certain cocycles admitting a dominated splitting. At the time of writing, it is however quite unclear if there is any relation between~\cite{MR4012341} and our construction which goes beyond the special case of cubic holomorphic differentials.  
\end{Remark}

\section{Examples}

Let $M$ be a closed oriented surface equipped with a hyperbolic metric $g_0$. Assume furthermore that the unit tangent bundle $SM$ of $(M,g_0)$ admits an $n$-th root $SM^{1/n}$, so that correspondingly we have an $n$-th root $K^{1/n}$ of the canonical bundle $K$ of $(M,g_0)$. Let $m$ be a positive integer and write $\ell=m/n$. We equip $K^{1+\ell}$ with the holomorphic structure determined by $g_0$, that is, in our previous notation, we choose $\theta\equiv 0$. Suppose $A$ is a holomorphic differential of fractional degree $1+\ell$. Note that such differentials exist by the Riemann--Roch theorem. In order to obtain one of our Anosov flows, we must thus find a metric $g$ in the conformal equivalence class of $g_0$ so that
\[
K_g=-1+\ell |A|^2_g.
\]
Under a conformal change $g_0 \mapsto \mathrm{e}^{2u}g_0$ with $u\in C^{\infty}(M)$, the norm $|A|_{g_0}^2$ changes as 
\[
|A|^2_{\mathrm{e}^{2u}g_0}=\mathrm{e}^{-2(1+\ell)u}|A|^2_{g_0}.
\]
We also have the identity 
\[
K_{\mathrm{e}^{2u}g_0}=\mathrm{e}^{-2u}(-1-\Delta u)
\]
for the change of the Gauss curvature under conformal change. Here $\Delta$ denotes the Laplace operator with respect to the hyperbolic metric. Writing $g=\mathrm{e}^{2u}g_0$, we thus obtain the PDE
\[
\Delta u=-1 +\mathrm{e}^{2u}-\ell\mathrm{e}^{-2\ell u}\alpha
\]
with $\alpha:=|A|^2_{g_0}$. Since $\alpha\geqslant 0$, this quasi-linear elliptic PDE admits a unique smooth solution which can be obtained by standard methods, see for instance~\cite[Prop.~1.9]{MR2744149}. Therefore, we obtain a solution to the vortex equations and an associated Anosov flow.

\begin{Remark}
Recall that every closed oriented hyperbolic Riemann surface $(M,g_0)$ admits a~\textit{Fuchsian model}, which realises its unit tangent bundle $SM$ as a quotient $\Gamma\setminus \mathrm{PSL}(2,\R)$, where $\Gamma\subset \mathrm{PSL}(2,\R)$ is a~\textit{Fuchsian group}, that is, a discrete torsion-free subgroup of $\mathrm{PSL}(2,\R)$. Therefore, we obtain a square root $SM^{1/2}\simeq \tilde{\Gamma}\setminus \mathrm{SL}(2,\R)$, where $\tilde{\Gamma}\subset \mathrm{SL}(2,\R)$ denotes the preimage of $\Gamma$ under the $2$-fold cover $\mathrm{SL}(2,\R) \to \mathrm{PSL}(2,\R)$. Since the unit tangent bundles with respect to conformally equivalent metrics are isomorphic as principal $\mathrm{SO}(2)$-bundles, we also obtain a square root of the unit tangent bundle for every metric in the conformal equivalence class of $g_0$. In particular, on every closed hyperbolic Riemann surface we obtain an Anosov flow on $SM^{1/2}$ from a holomorphic differential $A$ of fractional degree $1+1/2=3/2$. These flows are topologically orbit equivalent to the lift of a constant curvature geodesic flow \cite{MR758894}, but do not arise from the lift of a flow on $SM$.
\end{Remark} 

\appendix

\section{Variants of the vortex equations}\label{app:gencase}

Instead of our variant of the vortex equations, we may also consider the following pair of equations on an oriented Riemannian $2$-manifold $(M,g)$ of negative Euler characteristic 
\begin{equation}\label{eq:genvortex}
K_g-\delta_g\theta=-1+\ell\mathrm{e}^{2f}|A|^2_g-\frac{1}{k}\Delta_g f \qquad\text{and} \qquad \ov{\partial} A=k\, \theta^{0,1}\otimes A. 
\end{equation}
Here $A$ is a differential of fractional degree $1+\ell>1$, $\theta \in \Omega^1$, $f\in C^{\infty}$ and $k$ is a real constant. Notice that we recover our vortex equations by choosing $k=\ell$. We leave it as an exercise to the interested reader to check that for the choice $c=2(\ell+1)$, the usual vortex equations~\eqref{eq:usualvortex} are equivalent to~\eqref{eq:genvortex} when $k=\ell+1$. Again, it is straightforward to verify that~\eqref{eq:genvortex} are invariant under suitable gauge transformations. Namely, writing a gauge transformation as $\tau=\mathrm{e}^{w+\imag \vartheta}$ for $w,\vartheta\in C^{\infty}$, we obtain a solution 
\[
\tau \cdot (A,\theta,f)=\left(\mathrm{e}^{-(w+\imag\vartheta)}A,\theta-\frac{1}{k}(\d w +\star_g\d\vartheta),f+w\right)
\]
to the above vortex equations from a solution $(A,\theta,f)$. As before, we obtain a thermostat on a suitable root $SM^{1/n}$ of $SM$, by defining
\[
\lambda=\mathrm{e}^f a-\V\theta-\frac{1}{\ell}Hf. 
\]
where we use notation as in~\cref{sec:vortextothermo}. The thermostat is again invariant under real gauge transformations of the form $\tau=\mathrm{e}^w$, so that we can assume that $f$ vanishes identically. Thus we have
\[
K_g-\delta_g\theta=-1+\ell|A|^2_g\qquad \text{and}\qquad \ov{\partial}A=k\, \theta^{0,1}\otimes A. 
\]
Taking $p=\theta+\V a/(1+\ell)$, we compute exactly as in the proof of \cref{mainthm:domsplit} that
\[
\kappa_p=-1+(k-\ell)\Re\left((\theta+\imag\V\theta)\bm A\right)
\]
where $\bm{A}=\frac{\V a}{1+\ell}+\imag a$. For the usual vortex equations with $k=\ell+1$ we thus obtain $\kappa_p=-1+\Re\left((\theta+\imag\V\theta)\bm A\right)$. Moreover, for the usual vortex equations we have the bound $|\bm A|^2\leqslant 1/\ell$, see~\cite[Prop.~5.2]{MR1086749}. Thus, we still obtain a dominated splitting provided $|\theta+\imag\V\theta|<\sqrt{\ell}$. 

\begin{Remark}
We do not know if we still obtain a dominated splitting if the bound $|\theta+\imag\V\theta|<\sqrt{\ell}$ does not hold. 
\end{Remark}

\providecommand{\mr}[1]{\href{http://www.ams.org/mathscinet-getitem?mr=#1}{MR~#1}}
\providecommand{\zbl}[1]{\href{http://www.zentralblatt-math.org/zmath/en/search/?q=an:#1}{zbM~#1}}
\providecommand{\arxiv}[1]{\href{http://www.arxiv.org/abs/#1}{arXiv:#1}}
\providecommand{\doi}[1]{\href{http://dx.doi.org/#1}{DOI~#1}}
\providecommand{\href}[2]{#2}

\end{document}